\begin{document}

 \newtheorem{thm}{Theorem}[section]
 \newtheorem{cor}[thm]{Corollary}
 \newtheorem{lem}[thm]{Lemma}{\rm}
 \newtheorem{prop}[thm]{Proposition}

 \newtheorem{defn}[thm]{Definition}{\rm}
 \newtheorem{assumption}[thm]{Assumption}
 \newtheorem{rem}[thm]{Remark}
 \newtheorem{ex}{Example}
\numberwithin{equation}{section}

\def\la{\langle}
\def\ra{\rangle}
\def\e{{\rm e}}
\def\x{\mathbf{x}}
\def\by{\mathbf{y}}
\def\bz{\mathbf{z}}
\def\F{\mathcal{F}}
\def\R{\mathbb{R}}
\def\T{\mathbf{T}}
\def\N{\mathbb{N}}
\def\K{\mathbf{K}}
\def\bK{\overline{\mathbf{K}}}
\def\Q{\mathbf{Q}}
\def\M{\mathbf{M}}
\def\O{\mathbf{O}}
\def\C{\mathbf{C}}
\def\P{\mathbf{P}}
\def\Z{\mathbb{Z}}
\def\H{\mathcal{H}}
\def\A{\mathbf{A}}
\def\V{\mathbf{V}}
\def\AA{\overline{\mathbf{A}}}
\def\B{\mathbf{B}}
\def\c{\mathbf{C}}
\def\L{\mathbf{L}}
\def\bS{\mathbf{S}}
\def\H{\mathcal{H}}
\def\I{\mathbf{I}}
\def\Y{\mathbf{Y}}
\def\X{\mathbf{X}}
\def\G{\mathbf{G}}
\def\B{\mathbf{B}}
\def\f{\mathbf{f}}
\def\z{\mathbf{z}}
\def\y{\mathbf{y}}
\def\d{\hat{d}}
\def\bx{\mathbf{x}}
\def\y{\mathbf{y}}
\def\v{\mathbf{v}}
\def\g{\mathbf{g}}
\def\w{\mathbf{w}}
\def\b{\mathcal{B}}
\def\a{\mathbf{a}}
\def\q{\mathbf{q}}
\def\u{\mathbf{u}}
\def\s{\mathcal{S}}
\def\cc{\mathcal{C}}
\def\co{{\rm co}\,}
\def\cp{{\rm CP}}
\def\tg{\tilde{f}}
\def\tx{\tilde{\x}}
\def\supmu{{\rm supp}\,\mu}
\def\supnu{{\rm supp}\,\nu}
\def\m{\mathcal{M}}
\def\bR{\mathbf{R}}
\def\om{\mathbf{\Omega}}
\def\c{\mathbf{c}}
\def\s{\mathcal{S}}
\def\k{\mathcal{K}}
\def\la{\langle}
\def\ra{\rangle}
\def\blambda{{\boldmath{\lambda}}}
\def\bsmlambda{\boldmath{\lambda}}
\def\ov{\overline{o}}
\def\und{\underline{o}}
\title[nonnegativity]{Computing Gaussian \& exponential measures of semi-algebraic sets}

\author{Jean B. Lasserre}

\address{LAAS-CNRS and Institute of Mathematics\\
University of Toulouse\\
LAAS, 7 avenue du Colonel Roche\\
31077 Toulouse C\'edex 4,France}
\email{lasserre@laas.fr}
\date{}

\begin{abstract}
We provide a numerical scheme to approximate as closely as desired 
the Gaussian or exponential measure $\mu(\om)$ of (not necessarily compact) basic semi-algebraic sets
$\om\subset\R^n$. We obtain two  monotone (non-increasing and non-decreasing) sequences of 
upper and lower bounds $(\overline{\omega}_d)$, $(\underline{\omega}_d)$, $d\in\N$, 
each converging to $\mu(\om)$ as $d\to\infty$. For each $d$, computing $\overline{\omega}_d$ or $\underline{\omega}_d$
reduces to solving a semidefinite program whose size increases with $d$. 
Some preliminary (small dimension) computational experiments are encouraging and illustrate the
potential of the method. The method also works for any measure whose moments are known and which satisfies Carleman's condition.

\end{abstract}

\keywords{Computational geometry; Statistics; Probability; Gaussian measure; moments; semi-algebraic sets; semidefinite programming; semidefinite relaxations}

\subjclass{28A75 49Q15  13P25 90C22 90C05 65C60 65K05}

\maketitle

\section{Introduction}

Given a basic semi-algebraic set
\begin{equation}
\label{setk}
\om  \,=\,\{\,\x\in\R^n:\:g_j(\x)\,\geq\,0,\:j=1,\ldots,m\,\},
\end{equation}
for some polynomials $(g_j)\subset\R[\x]$, we want to compute (or more precisely, approximate as closely as desired)
$\mu(\om)$ for the standard Gaussian measure on $\R^n$:
\begin{equation}
\label{mu-gauss}
\mu(B)\,=\,\frac{1}{(2\pi)^{n/2}}\int_B\exp(-\frac{1}{2}\Vert\x\Vert^2)\,d\x,\qquad \forall\,B\in\mathcal{B}(\R^n),
\end{equation}
and the standard exponential measure on the positive orthant $\R^n_+$:
\begin{equation}
\label{mu-exp}
\mu(B)\,=\,\frac{1}{(2\pi)^{n/2}}\int_B\exp(-\sum_{i=1}^n x_i)\,d\x,\qquad \forall\,B\in\mathcal{B}(\R^n_+).
\end{equation}
This problem is ``canonical" as for a non-centered Gaussian with density $\exp(-(\x-m)^T\Q(\x-m))$ 
(for some real symmetric positive definite matrix $\Q$)
or for an exponential measure with density $\exp(-\sum_i\lambda_i x_i)$
one may always reduce the problem to 
the above one by an appropriate change of variable. Indeed after this change of variable 
the new domain is again a basic semi-algebraic set of the form (\ref{setk}).

Computing $\mu(\om)$ has applications in Probability \& Statistics where the Gaussian measure plays a central role. 
In full generality with sets $\om$ as general as (\ref{setk}), it is a difficult and challenging problem even for rectangles $\om$, e.g.:
\begin{equation}
\label{rectangle}
\frac{1}{2\pi\sqrt{1-\rho^2}}\int_{-\infty}^{a_1}\int_{-\infty}^{a_2}\exp(-(x^2+y^2-2\rho \,xy)/2(1-\rho^2))\,dx\,dy,
\end{equation}
in small dimension like $n=2$ or $n=3$.
Indeed, citing A. Genz \cite{genz1}: {\it bivariate and trivariate probability distributions computation are needed for many statistics applications $\ldots$ high quality algorithms for bivariate and trivariate probability distribution computations have only more recently
started to become available.} For instance, Genz \cite{genz1} describes techniques with high accuracy results for bivariate and trivariate 
``rectangles" (\ref{rectangle}) using sophisticated techniques to integrate Plackett's formula. Again 
those efficient techniques takes are very specific as they take advantage of Plackett's formula available for
(\ref{rectangle}). Interestingly, a (complicated) formula in closed form is provided in Chandramouli and Ranganathan \cite{chandramouli}
via the characteristic function method.

The case of ellipsoids $\om$ has been investigated in the pioneering work of Ruben \cite{ruben}, Kotz et al. \cite{kotz1,kotz2} when studying the distribution of random variables that are quadratic forms of independent normal variables. Even in small dimension 
it has important aplications
in Astronautics where for instance $\mu(\om)$ can model the probability of collision between two spatial
vehicles and must be computed with very good accuracy; see for instance
the works by Alfano \cite{alfano}, Chan \cite{chan}, Patera \cite{patera} and the more recent
\cite{spatial-1} which combines Laplace transform techniques 
with the theory of $D$-finite functions (see e.g. Zeilberger \cite{spatial-3} and Salvy \cite{spatial-4}). In doing so one obtains 
$\mu(\om)$ 
as a series with only nonnegative terms so that its evaluation does not involve error prone cancellations.
For more details the reader is referred to \cite{alfano,chan,patera,spatial-1} and the references therein. Other applications in Astronautics and for weapon evaluation
require to compute integral of bivariate Normal distributions on {\it convex polygons}, a case treated
in Didonato et al. \cite{polygons} where at an intermediate step the authors also evaluate Gaussian integrals on (unbounded) angular regions.

However the techniques developed in the above cited works 
and in some of the references therein, are not reproducible for more general sets $\om$ of the form (\ref{setk}).

It should be noted that computing exactly the Lebesgue measure of a compact convex set is a very difficult problem in general (even for a convex polytope) and in fact, even approximating the volume of a polytope within some bounds is difficult; on the other hand some {\it probabilistic} methods can provide good estimates. For more details on the computational complexity of volume computation
the interested reader is referred to the discussion in \cite{sirev} and the references therein. 

\subsection*{Contribution}

The goal of this paper is to provide a systematic numerical scheme to
approximate $\mu(\om)$ as closely as desired at the price of solving a hierarchy of semidefinite programs\footnote{A semidefinite program is a convex conic optimization problem which can be solved
at arbitrary recision (fixed in advance) efficiently, i.e., in time polynomial in its input size; for more details the interested reader is referred to e.g. \cite{handbook}.} 
of increasing size.
To do so we use  that computing $\mu(\om)$ is equivalent to solving a linear program on an appropriate space of measures,
an instance of the Generalized Problem of Moments. We also use that 
{\it all moments $\y=(y_\alpha)$, $\alpha\in\N^n$, of $\mu$ can be computed efficiently}. 
In fact, the methodology that we propose is valid for {\it any} measure $\mu$ on $\R^n$ 
which satisfies Carleman's condition and whose moments can be computed. (The gaussian and exponential
measures being two notable examples of such measures as their moments are available in closed form.)

These two ingredients were already used in Henrion et al. \cite{sirev} to compute 
the Lebesgue {\it volume} of a {\it compact} basic semi-algebraic set $\om$ as in (\ref{setk}), by solving a
hierarchy of semidefinite programs. The resulting sequence of optimal values was shown to converge to
the Lebesgue volume of $\om$. A technique to improve the convergence was also
provided in \cite{sirev} to limit  the Gibbs effect present in the initial and basic version of the numerical scheme.
(With this technique the convergence is indeed much faster but not monotone any more.)
So the novelty in this paper with respect to \cite{sirev} is threefold:

$\bullet$ We here consider the Gaussian and exponential measures on $\R^n$ and $\R^n_+$ respectively, instead of the Lebesgue measure. If
the latter is appropriate in applications of computational geometry (e.g. to compute ``volumes"), the former
are particularly interesting for applications  in Probability \& Statistics (especially the Gaussian measure
which plays a central role).
In addition, as already mentioned the method also works for {\it any} measure $\mu$ on $\R^n$ 
which satisfies Carleman's condition and whose moments can be computed.  In a few words,
we show how from the sole knowledge of moments of such measures $\mu$, one may approximate as closely as desired 
the measure $\mu(\om)$ of basic closed semi-algebraic sets $\om\subset\R^n$.

$\bullet$ We can handle {\it any} basic semi-algebraic set $\om$ of the form (\ref{setk}) whereas \cite{sirev} is restricted to compact ones. Importantly, the method provides two monotone sequences $\overline{\omega}_d$
and $\underline{\omega}_d$, $d\in\N$, such that $\underline{\omega}_d\leq\mu(\om)\leq \overline{\omega}_d$ for all $d$, 
and each sequence converges to $\mu(\om)$ as $d\to\infty$.
Therefore at each step $d$ the error $\overline{\omega}_d-\mu(\om)$ is bounded by 
$\epsilon_d:=\overline{\omega}_d-\underline{\omega}_d$
and $\epsilon_d\to0$ as $d\to\infty$. In our opinion this 
is an important feature which to the best of our knowledge is not present in other methods of the Literature, at least at this level of generality.

It is also worth noting that the method also provides a convergent numerical scheme to
approximate as closely as desired any ({\it \`a priori} fixed) number of moments of the measure $\mu_\om$,
the restriction of $\mu$ to $\om$ ($\mu(\om)$ being only the mass of $\mu_\om$).

$\bullet$ The acceleration technique described in \cite{sirev} also applies to our context of the Gaussian measure on non-compact sets $\om$. But again the monotone convergence is lost. Therefore we also provide another technique of independent interest to accelerate the convergence of the numerical scheme.
It uses Stokes' Theorem for integration which permits to obtain linear relations between moments of $\mu_\om$. 

\subsection*{Comptational remarks} As we did in \cite{sirev} for the compact case and the Lebesgue measure, the problem is formulated as an instance of the the {\it generalized problem of moments} with polynomial data and we approximate $\mu(\om)$ as closely as desired by solving
a hierarchy of semidefinite programs of increasing size. 
This procedure is implemented in the software package GloptiPoly\footnote{GloptiPoly \cite{gloptipoly} is a software package for solving the Generalized Problem of Moments with polynomial data.} which for modelling convenience uses the standard basis of monomials. 
As the monomial basis is well-known to be  a source of numerical ill-conditioning,
only a limited control on the output accuracy is possible and so in this case only bounds $(\overline{\omega}_d,\underline{\omega}_d)$ 
with $d\leq10$ for $n=2,3$ are meaningful.
Therefore for simple sets like rectangles (\ref{rectangle}) and ellipsoids (with $n=3$), in its present form our technique does not compete 
in terms of accuracy with {\it ad-hoc} procedures
like in e.g. Genz \cite{genz1} (rectangles) and the recent \cite{spatial-1} (for ellipsoids).
However, in
view of the growing interest for semidefinite programming 
and its use in many applications, it is expected that more efficient packages
will be available soon. For instance the semidefinite package SDPA \cite{sdpa} has now
been provided with a double precision variant\footnote{The SDPA-GMP, SDPA-QD and SDPA-DD  versions of the standard SDPA package; see {\tt http://sdpa.sourceforge.net/family.html\#sdpa}.}. Moreover  a
much better accuracy could be obtained if one uses other bases for polynomials than the standard monomial basis.
For instance when $\mu$ is the Gaussian measure one should rather use the basis of Hermite polynomials, orthogonal with respect to $\mu$. 

Such issues are beyond the scope of the present paper and on the other hand, for very general sets like (\ref{setk}) in relatively small dimension, and to the best of our knowledge for the first time, our method can provide relatively good bounds $\underline{\omega}_d\leq\mu(\om)\leq\overline{\omega}_d$ in a reasonable amount of time, and with strong theoretical guarantees as $d$ increases.

\section{Main result}

\subsection{Notation and definitions}
Let $\R[\x]$ be the ring of polynomials in the variables
$\x=(x_1,\ldots,x_n)$.
Denote by $\R[\x]_d\subset\R[\x]$ the vector space of
polynomials of degree at most $d$, which forms a vector space of dimension $s(d)={n+d\choose d}$, with e.g.,
the usual canonical basis $(\x^\alpha)$ of monomials.
Also, let $\N^n_d:=\{\alpha\in\N^n\,:\,\sum_i\alpha_i\leq d\}$ and
denote by $\Sigma[\x]\subset\R[\x]$ (resp. $\Sigma[\x]_d\subset\R[\x]_{2d}$)
the space of sums of squares (SOS) polynomials (resp. SOS polynomials of degree at most $2d$). 
If $f\in\R[\x]_d$, write
\[f(\x)\,=\,\sum_{\alpha\in\N^n_d}f_\alpha\, \x^\alpha\quad\left(= \sum_{\alpha\in\N^n_d}f_\alpha \,\x_1^{\alpha_1}\cdots \x_n^{\alpha_n}\right),\]
in the canonical basis and
denote by $\f=(f_\alpha)\in\R^{s(d)}$ its vector of coefficients. Finally, let $\s^n$ denote the space of 
$n\times n$ real symmetric matrices, with inner product $\la \A,\B\ra ={\rm trace}\,\A\B$, and where the notation
$\A\succeq0$ (resp. $\A\succ0$) stands for $\A$ is positive semidefinite. 

Let $(\A_j)$, $j=0,\ldots,s$,  be a set of real symmetric matrices. An inequality of the form
\[\left(\A(\x)\,:=\,\right)\: \A_0+\sum_{k=1}^s\A_k\,x_k\:\succeq0,\quad \x\in\R^s,\]
is called a {\it Linear Matrix Inequality} (LMI) and a set of the form $\{\x: \A(\x)\succeq0\}$ is
the canonical form of the feasible set of semidefinite programs.

Given a real sequence $\z=(z_\alpha)$, $\alpha\in\N^n$, define the Riesz linear functional $L_\z:\R[\x]\to\R$ by:
\[f\:(=\sum_\alpha f_\alpha\x^\alpha)\quad\mapsto L_\z(f)\,=\,\sum_{\alpha}f_\alpha\,z_\alpha,\qquad f\in\R[\x].\]
A sequence $\z=(z_\alpha)$, $\alpha\in\N^n$, has a representing measure $\mu$ if
\[z_\alpha\,=\,\int_{\R^n}\x^\alpha\,d\mu,\qquad\forall\,\alpha\in\N^n.\]

\subsection*{Moment matrix}
The {\it moment} matrix associated with a sequence
$\z=(z_\alpha)$, $\alpha\in\N^n$, is the real symmetric matrix $\M_d(\z)$ with rows and columns indexed by $\N^n_d$, and whose entry $(\alpha,\beta)$ is just $z_{\alpha+\beta}$, for every $\alpha,\beta\in\N^n_d$. 
Alternatively, let
$\v_d(\x)\in\R^{s(d)}$ be the vector $(\x^\alpha)$, $\alpha\in\N^n_d$, and
define the matrices $(\B_\alpha)\subset\s^{s(d)}$ by
\begin{equation}
\label{balpha}
\v_d(\x)\,\v_d(\x)^T\,=\,\sum_{\alpha\in\N^n_{2d}}\B_\alpha\,\x^\alpha,\qquad\forall\x\in\R^n.\end{equation}
Then $\M_d(\z)=\sum_{\alpha\in\N^n_{2d}}z_\alpha\,\B_\alpha$.
If $\z$ has a representing measure $\mu$ then
$\M_d(\z)\succeq0$ because
\[\langle\f,\M_d(\z)\f\rangle\,=\,\int f^2\,d\mu\,\geq0,\qquad\forall \,\f\,\in\R^{s(d)}.\]
A measure whose all moments are finite is said to be {\it moment determinate} if there is no other measure with same moments.
The support of a Borel measure $\mu$ on $\R^n$ (denoted $\supmu$) is the smallest closed set $\K$ such that $\mu(\R^n\setminus\K)=0$.

A sequence $\z=(z_\alpha)$, $\alpha\in\N^n$, satisfies Carleman's condition if
\begin{equation}
\label{carleman}
\sum_{k=1}^\infty L_\z(x_i^{2k})^{-1/2k}\,=\,+\infty,\qquad\forall i=1,\ldots,n.
\end{equation}
If a sequence $\z=(z_\alpha)$, $\alpha\in\N^n$, satisfies Carleman's condition (\ref{carleman}) and $\M_d(\z)\succeq0$ for all $d=0,1,\ldots$, then
$\z$ has a representing measure on $\R^n$ which is moment determinate; see e.g. \cite[Proposition 3.5]{lass-book-icp}.
In particular a sufficient condition for a measure $\mu$ to satisfy Carleman's condition 
is that $\int \exp(c\sum_i\vert x_i\vert)d\mu <\infty$ for some $c>0$.

\subsection*{Localizing matrix}
With $\z$ as above and $g\in\R[\x]$ (with $g(\x)=\sum_\gamma g_\gamma\x^\gamma$), the {\it localizing} matrix associated with $\z$ 
and $g$ is the real symmetric matrix $\M_d(g\,\z)$ with rows and columns indexed by $\N^n_d$, and whose entry $(\alpha,\beta)$ is just $\sum_{\gamma}g_\gamma z_{\alpha+\beta+\gamma}$, for every $\alpha,\beta\in\N^n_d$.
Alternatively, let $\C_\alpha\in\s^{s(d)}$ be defined by:
\begin{equation}
\label{calpha}
g(\x)\,\v_d(\x)\,\v_d(\x)^T\,=\,\sum_{\alpha\in\N^n_{2d+{\rm deg}\,g}}\C_\alpha\,\x^\alpha,\qquad\forall\x\in\R^n.\end{equation}
Then $\M_d(g\,\z)=\sum_{\alpha\in\N^n_{2d+{\rm deg}g}}z_\alpha\,\C_\alpha$.
If $\z$ has a representing measure $\mu$ whose support is 
contained in the set $\{\x\,:\,g(\x)\geq0\}$ then
$\M_d(g\,\z)\succeq0$ because
\[\langle\f,\M_d(g\,\z)\f\rangle\,=\,\int f^2\,g\,d\mu\,\geq0,\qquad\forall \,\f\,\in\R^{s(d)}.\]
Let $g_0(\x)=1$ for all $\x\in\R^n$, and with a family $(g_1,\ldots,g_m)\subset\R[\x]$ is associated 
the {\it quadratic module}:
\begin{equation}
\label{module}
Q(g_1,\ldots,g_m)\,:=\,\left\{\sum_{j=0}^m\sigma_j\,g_j:\quad \sigma_j\in\Sigma[\x],\:j=0,\ldots,m\,\right\},
\end{equation}
and its {\it truncated} version
\begin{equation}
\label{truncated-module}
Q_k(g_1,\ldots,g_m)\,:=\,\left\{\sum_{j=0}^m\sigma_j\,g_j:\quad \sigma_j\in\Sigma[\x]_{k-d_j},\:j=0,\ldots,m\,\right\},
\end{equation}
where $d_j=\lceil{\rm deg}(g_j)/2\rceil$, $j=0,\ldots,m$.
Next, given a closed set $\mathcal{X}\subseteq\R^n$, let $C(\mathcal{X})\subset\R[\x]$ (resp. $C_d(\mathcal{X})\subset\R[\x]_d$) be the convex cone of polynomials (resp. polynomials of degree at most $2d$) that are nonnegative on $\mathcal{X}$.

For mored details on the above notions of moment and localizing matrix, quadratic module,
as well as their use in potential applications, the interested reader is referred to Lasserre \cite{lass-book-icp}.

\subsection{A basic numerical scheme for upper bounds}

Recall that the goal is to compute $\mu(\om) $, where 
$\mu$ is the Gaussian measure in (\ref{mu-gauss}) or the exponential measure in (\ref{mu-exp})
and $\om$  the basic semi-algebraic set in (\ref{setk}). 
Let $\y=(y_\alpha)$, $\alpha\in\N^n$, be the moments of $\mu$, i.e.,
\[y_\alpha\,=\,\int \x^\alpha\,d\mu,\qquad\alpha\in\N^n,\]
which are obtained in closed form.
Call $\mu_\om$ the {\em restriction} of $\mu$ to $\om$, that is:
\[\mu_\om(B)\,=\,\mu(\om\cap B),\quad\forall B\in\,\mathcal{B}(\R^n).\]
Let $M(\R^n)$ (resp. $M(\om)$) be the convex cone of finite (nonnegative) Borel measures 
on $\R^n$ (resp. $\om$), and consider the new abstract problem:
\begin{equation}
\label{abstract-primal-new}
\rho\,=\,\sup_{\phi,\nu}\:\{\,\int f\,d\phi:\: \phi+\nu\,=\,\mu;\quad\phi\in M(\om) ,\:\nu\in M(\R^n)\,\}.
\end{equation}
\begin{lem}
\label{lem-positive}
Let $f\in\R[\x]$ be strictly positive almost everywhere on $\om$ . Then 
$(\phi^*,\nu^*):=(\mu_\om,\mu-\mu_\om)$  is the unique optimal solution of (\ref{abstract-primal-new}) and 
$\phi^*(\om)=\mu(\om)$ while $\rho=\int_\om fd\mu$. In particular if $f=1$ then $\rho=\mu(\om)$.
\end{lem}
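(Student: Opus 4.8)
The claim asserts that splitting $\mu$ into a piece supported on $\om$ and a remainder, the optimal way to maximize $\int f\,d\phi$ (for $f$ strictly positive a.e.\ on $\om$) is the canonical split $(\mu_\om,\mu-\mu_\om)$, and that this split is the unique maximizer. The plan is to argue in three steps: feasibility of the proposed solution, optimality via a pointwise domination argument, and uniqueness via the strict positivity hypothesis.

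\emph{Feasibility and the candidate value.} First I would check that $(\phi^*,\nu^*)=(\mu_\om,\mu-\mu_\om)$ is feasible. Since $\mu_\om$ is the restriction of $\mu$ to $\om$, its support lies in $\om$, so $\phi^*\in M(\om)$; and $\nu^*=\mu-\mu_\om\geq0$ because $\mu_\om(B)=\mu(\om\cap B)\leq\mu(B)$ for every Borel $B$, so $\nu^*\in M(\R^n)$. Clearly $\phi^*+\nu^*=\mu$, so the constraint holds. The objective value at this point is $\int f\,d\mu_\om=\int_\om f\,d\mu$, which gives the candidate optimal value $\rho=\int_\om f\,d\mu$ and, when $f=1$, the value $\mu(\om)$.

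\emph{Optimality.} Now I would show no feasible $(\phi,\nu)$ beats this. The key structural observation is that any feasible $\phi$ satisfies $\phi\leq\mu$ setwise: from $\phi+\nu=\mu$ with $\nu\geq0$ we get $\phi(B)\leq\mu(B)$ for all Borel $B$. Moreover $\phi\in M(\om)$ forces $\phi$ to be concentrated on $\om$, so $\phi$ is dominated by $\mu_\om$; writing this via Radon--Nikodym, $\phi$ has a density $h$ with respect to $\mu_\om$ with $0\leq h\leq1$ a.e. Hence, since $f>0$ a.e.\ on $\om$,
\[
\int f\,d\phi\,=\,\int_\om f\,h\,d\mu\,\leq\,\int_\om f\,d\mu\,=\,\int f\,d\phi^*,
\]
which establishes optimality.

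\emph{Uniqueness.} This is where the strict positivity of $f$ does the real work, and I expect it to be the one point requiring care. Equality in the displayed bound means $\int_\om f\,(1-h)\,d\mu=0$. Since $f>0$ $\mu$-a.e.\ on $\om$ and $1-h\geq0$, the integrand is nonnegative and vanishes $\mu$-a.e., forcing $h=1$ $\mu$-a.e.\ on $\om$, i.e.\ $\phi=\mu_\om=\phi^*$; then $\nu=\mu-\phi=\nu^*$. The subtle part is handling the null sets correctly: ``strictly positive almost everywhere'' allows $f$ to vanish on a $\mu$-null set, but that set contributes nothing to the integral, so the argument is unaffected. The hypothesis $f>0$ (rather than merely $f\geq0$) is exactly what is needed to conclude $1-h=0$ rather than only $f(1-h)=0$; if $f$ were allowed to vanish on a set of positive measure, uniqueness would fail there. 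This completes the proof, and specializing to $f\equiv1$ gives $\rho=\mu(\om)$ with $\phi^*(\om)=\mu_\om(\om)=\mu(\om)$.
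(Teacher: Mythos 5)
Your proposal is correct and follows essentially the same route as the paper: feasibility of $(\mu_\om,\mu-\mu_\om)$, optimality from the setwise domination $\phi\leq\mu_\om$, and uniqueness via the Radon--Nikodym density $h\leq1$ together with the strict positivity of $f$ forcing $h=1$ a.e. The only cosmetic difference is that you invoke Radon--Nikodym already in the optimality step, whereas the paper reserves it for the uniqueness argument.
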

\begin{proof}
As $\phi\leq\mu$ and $f\geq0$ on $\om$ , we have $\int f\,d\phi\leq\int_{\om  }f \, d\mu=\int fd\mu_\om  $, and so
$\rho\leq \int fd\mu_\om  $. On the other hand, pick $(\phi^*,\nu^*):=(\mu_\om  ,\mu-\mu_\om) $, to obtain 
$\int fd\phi^*=\int fd\mu_\om  $, which shows that $\rho=\int fd\mu_\om $, and $\phi^*(\om)=\mu(\om)$. 

Next, as $\phi+\nu=\mu$
every feasible solution is such that $\phi$ is absolutely continuous with respect to $\mu$, and
hence with respect to $\mu_\om  $, denoted $\phi\ll\mu_\om  $. In particular, by the Radon-Nikodym theorem \cite{ash}, there exists a 
nonnegative measurable function $g$ on $\om$  such that
\[\phi(B)\,=\,\int_B g(\x)\,d\mu_\om  ,\qquad\forall B\in\mathcal{B}(\om) ,\]
and since $\phi(B)\leq \mu_\om  (B)$ for all $B\in\mathcal{B}(\om) $, it follows that
$g(\x)\leq 1$ almost everywhere on $\om$ .
So suppose that there exists another optimal solution $(\phi,\nu)$ with $\int fd\phi=\rho$, that is,
$\int  f (g-1)d\mu_\om  =0$. As $f>0$  a.e. on $\om$ and $g\leq 1$ on $\om$, this implies $\mu_\om  (\{\x:g(\x)<1\})=0$.
In other words $g=1$ a.e. on $\om$ which in turn implies $\phi(B)=\mu(B)=\phi^*(B)$ for all
$B\in\mathcal{B}(\om)$. That is, $(\phi,\nu)=(\phi^*,\nu^*)$.
\end{proof}
Problem (\ref{abstract-primal-new}) is an infinite dimensional linear program on an appropriate space of measures and
its dual is the following optimization problem:
\begin{equation}
\label{abstract-dual}
\rho^*\,=\,\displaystyle\inf_{p\in\R[\x]}\,\{\,\displaystyle \int  p\,d\mu:\: p-f\in C(\om); \quad p\in\,C(\R^n)\,\}
\end{equation}
with $\rho^*\geq\int fd\mu_\om$. However neither (\ref{abstract-primal-new}) nor its dual (\ref{abstract-dual})
are tractable. This is  why, with $d\geq d_0:=\max_jd_j$ fixed, we now consider the semidefinite program:
\begin{equation}
\label{scheme-1-primal}
\begin{array}{rl}
\ov_d\,=\,\displaystyle\sup_{\u,\v\in\R[\x]_{2d}^*}&\{\,L_\u(f)\:\mbox{ s.t. }
u_\alpha+v_\alpha\,=\,y_\alpha,\quad\forall\,\alpha\in\N^n_{2d};\\
&\M_d(\u),\,\M_d(\v)\succeq0;\:\M_{d-d_j}(g_j\,\u)\succeq0,\quad j=1,\ldots,m\},
\end{array}
\end{equation}
which is a {\it relaxation} of  (\ref{abstract-primal-new}), hence with $\overline{o}_d\geq\int_\om fd\mu$ for all $d$. 
Indeed the constraints on $(\u,\v)$ in (\ref{scheme-1-primal}) are only necessary
for $\u$ and $\v$ to be moments of some measures $\phi$ on $\om$ and $\nu$ on $\R^n$ respectively,
such that $\phi+\nu=\mu$. 

\subsection*{Computational complexity}
The semidefinite program (\ref{scheme-1-primal}) has $2\,{n+2d\choose n}$ variables and the 
matrices $\M_d(\u),\M_d(\v)$ are ${n+d\choose n}\times {n+d\choose n}$. In principle most SDP solvers
use interior point methods, considered to be the most efficient. 
So in view of the present status of state-of-the-art semidefinite solvers,
this methodology is still limited to problems in small dimension $n$ (say $n\leq 3,4$). Indeed even though 
semidefinite programs (SDPs) are convex optimization problems, the computational complexity of interior point methods
is still too high to solve large size SDPs on today's computers. For more details on interior point methods, their analysis and refinements, see e.g.  \cite{handbook} and \cite{wright}.

\begin{rem}
\label{support}
In case where $\mu$ is the exponential measure (\ref{mu-exp}) then the support of the measure $\phi$ in (\ref{scheme-1-primal})
is in fact $\om\cap\supmu=\om\cap\R^n_+$, which is the same as replacing $\om$ with $\om\cap\R^n_+$, i.e.,
\begin{equation}
\label{newsetk}
\om\,:=\,\om\,\cap\,\R^n_+\,=\,\{\x: g_j(\x)\geq0,\:j=1,m+n\},
\end{equation}
by introducing the additional polynomials $\x\mapsto g_{m+i}(\x)=x_i$, $i=1,\ldots,n$. 

Then in (\ref{scheme-1-primal}) the ``support" constraints
$\M_{d-d_j}(g_j\,\u)\succeq0$, $j=1,\ldots,m$, now become
$\M_{d-d_j}(g_j\,\u)\succeq0$, $j=1,\ldots,m+n$.
\end{rem}

The dual of (\ref{scheme-1-primal}) is also a semidefinite program, which in compact form reads:
\begin{equation}
\label{scheme-1-dual}
\ov^*_d\,=\,\displaystyle\inf_{p\in\R[\x]_{2d}}\,\{\,\displaystyle \int  p\,d\mu:\: p-f\in Q_{2d}(\om); \quad p\in\Sigma[\x]_d\,\},
\end{equation}
and as expected, is a {\it strengthening} of (\ref{abstract-dual}). Indeed one has replaced the nonnegativity contraint
$p-f\geq0$ on $\om$ (resp. $p\geq0$ on $\R^n$) with the stronger membership $(p-f)\in Q_{2d}(\om)$
(resp. the stronger membership $p\in\Sigma[\x]_d$).\\

One main result of this paper is the following:
\begin{thm}
\label{th1}
Let $\mu$ be as in (\ref{mu-gauss}) or (\ref{mu-exp}).
With $\om  \subset\R^n$ as in (\ref{setk}) (or as in (\ref{newsetk}) when $\mu$ is the exponential measure), assume that
both $\om$  and $\supmu\setminus\om  $ have nonempty interior and let $f\in\R[\x]$ be strictly positive 
a.e. on $\om$. Then:

(a) Both problems (\ref{scheme-1-primal}) and 
(\ref{scheme-1-dual}) have an optimal solution and $\ov_d=\ov^*_d$ for every $d\geq d_0$.

(b) The sequence $(\ov_d)$, $d\in\N$, is monotone non-increasing and $\ov_d\to\int_\om fd\mu$ as $d\to\infty$.
In addition if $(\u^d,\v^d)$ is an optimal solution of (\ref{scheme-1-primal}) then $u^d_0\to\mu(\om)$ as $d\to\infty$.

(c) If $f=1$ then the sequence $(\ov_d)$, $d\in\N$, is monotone non-increasing and $\ov_d\to\mu(\om) $ as $d\to\infty$.
\end{thm}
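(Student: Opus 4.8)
The plan is to read (\ref{scheme-1-primal}) and (\ref{scheme-1-dual}) as a primal--dual pair of semidefinite programs and to prove convergence by extracting a pointwise limit of optimal truncated moment sequences, identifying that limit through Carleman's condition and moment determinacy. For part (a) I would first get \emph{primal attainment} from compactness: the equality constraints give $\M_d(\v)=\M_d(\y)-\M_d(\u)$, so feasibility forces $0\preceq\M_d(\u)\preceq\M_d(\y)$, and since all moments of $\mu$ are finite, $\M_d(\y)$ is a fixed positive semidefinite matrix; this two-sided bound controls every entry of $\u$ (each diagonal entry $u_{2\beta}$ lies in $[0,y_{2\beta}]$, each off-diagonal entry is bounded by positive semidefiniteness), so $\u$, hence $\v=\y-\u$, range over a compact set on which the linear objective $L_\u(f)$ attains its supremum. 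To obtain $\ov_d=\ov^*_d$ and dual attainment I would exhibit a strictly feasible (Slater) primal point using the interior hypotheses: let $\phi$ be $\mu$ restricted to a ball $B$ contained in the interior of $\om$ on which every $g_j$ is strictly positive (such a ball exists since the interior is nonempty, barring trivial constraints $g_j\equiv0$), and set $\nu=\mu-\phi$, whose support retains nonempty interior. Their truncated moment vectors satisfy $\M_d(\u)\succ0$, $\M_d(\v)\succ0$ and $\M_{d-d_j}(g_j\,\u)\succ0$, because $\int h^2\,d\phi>0$ and $\int h^2 g_j\,d\phi>0$ for every nonzero polynomial $h$, and similarly for $\nu$. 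Standard conic duality (strict primal feasibility with finite value) then yields no duality gap and attainment of the dual infimum.

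For part (b), monotonicity is immediate: a feasible point at level $d+1$ restricts to a feasible point at level $d$ (principal submatrices of positive semidefinite moment and localizing matrices are positive semidefinite, and the moment equalities at level $d$ form a subset of those at $d+1$), and the objective depends only on the low-order moments, so $\ov_{d+1}\le\ov_d$. Since the truncated moments of $(\mu_\om,\mu-\mu_\om)$ are always feasible with objective value $\int_\om f\,d\mu$, we also have $\ov_d\ge\int_\om f\,d\mu$. Hence $\ov_d$ decreases to some $\rho^*\ge\int_\om f\,d\mu$, and it remains to prove equality. Taking optimal $(\u^d,\v^d)$, the sandwich $0\preceq\M_d(\u^d)\preceq\M_d(\y)$ bounds each coordinate $u^d_\alpha$ uniformly in $d$, so a diagonal extraction produces a subsequence along which $u^d_\alpha\to u^*_\alpha$ and $v^d_\alpha\to v^*_\alpha$ for every $\alpha$. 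For each fixed level the constraints involve only finitely many coordinates, so letting the subsequence index tend to infinity gives $u^*_\alpha+v^*_\alpha=y_\alpha$, together with $\M_d(\u^*)\succeq0$, $\M_d(\v^*)\succeq0$ and $\M_d(g_j\,\u^*)\succeq0$ for all $d$ and all $j$.

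The crux is to promote $\u^*,\v^*$ to measures supported where they should be. Here the hypothesis on $\mu$ is essential: since $v^*_{2ke_i}\ge0$, the diagonal entry is squeezed as $0\le u^*_{2ke_i}\le y_{2ke_i}$, whence $L_{\u^*}(x_i^{2k})^{-1/2k}\ge L_\y(x_i^{2k})^{-1/2k}$, so $\u^*$ inherits Carleman's condition (\ref{carleman}) from $\y$, and likewise $\v^*$. By the cited determinacy result, $\u^*$ and $\v^*$ admit moment-determinate representing measures $\phi^*,\nu^*$ on $\R^n$; the conditions $\M_d(g_j\,\u^*)\succeq0$ for all $d$ together with determinacy force ${\rm supp}\,\phi^*\subseteq\{g_j\ge0\}$ for each $j$, so $\phi^*\in M(\om)$. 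As $\phi^*+\nu^*$ has the moments $\y$ of the determinate measure $\mu$, it follows that $\phi^*+\nu^*=\mu$, making $(\phi^*,\nu^*)$ feasible for (\ref{abstract-primal-new}). Finally $\rho^*=\lim L_{\u^d}(f)=L_{\u^*}(f)=\int f\,d\phi^*\le\int_\om f\,d\mu$, which forces $\rho^*=\int_\om f\,d\mu$ and makes $(\phi^*,\nu^*)$ optimal; by the uniqueness in Lemma \ref{lem-positive}, $\phi^*=\mu_\om$. Since this pins down the limit of \emph{every} convergent subsequence, the full sequence of moments of $\u^d$ converges to those of $\mu_\om$; in particular $u^d_0\to\mu_\om(\R^n)=\mu(\om)$. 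Part (c) is the specialization $f=1$, where $L_\u(f)=u_0$ and $\int_\om f\,d\mu=\mu(\om)$, so both assertions follow from (b).

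I expect the last paragraph to be the main obstacle. Compactness readily delivers a coordinatewise limit, but \emph{identifying} it is delicate: one must use the two-sided moment bound to transport Carleman's condition and determinacy from $\mu$ to $\phi^*$ and $\nu^*$, and then use moment determinacy twice over, once to localize ${\rm supp}\,\phi^*$ inside $\om$ via the limiting localizing conditions, and once to recover the exact identity $\phi^*+\nu^*=\mu$. The duality in (a) and the monotonicity in (b) are comparatively routine.
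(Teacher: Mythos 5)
Your proposal is correct and follows essentially the same route as the paper: primal attainment via the bound $0\preceq\M_d(\u)\preceq\M_d(\y)$, Slater's condition from the nonempty-interior hypotheses for strong duality and dual attainment, then a diagonal subsequence extraction whose limit is identified through Carleman's condition, moment determinacy, the localizing conditions (the paper makes this step precise by citing a $K$-moment theorem under the growth bound $L_{\u^*}(x_i^{2k})\le M(2k)!$, which is where your appeal to ``determinacy'' should be anchored), and the uniqueness part of Lemma \ref{lem-positive}. Your only real deviation is the choice of Slater point (restricting $\mu$ to a small ball where all $g_j>0$, which also makes the localizing matrices positive definite, rather than using $(\mu_\om,\mu-\mu_\om)$ as the paper does); this is a minor refinement, not a different argument.
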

\begin{proof}
(a) Recall that $\mu_\om  $ is the restriction of $\mu$ to $\om$,
and let $\u=(y^\om  _\alpha)$), $\alpha\in\N^n_{2d}$, be the moments of $\mu_\om$,
up to order $2d$. Similarly let $\v=(y_\alpha-u_\alpha)$), $\alpha\in\N^n_{2d}$, be the moments of $\mu-\mu_\om  $.
Hence $(\u,\v)$ is a feasible for (\ref{scheme-1-primal}). 
In addition,
as both $\om$  and $\supmu\setminus\om  $ have nonempty interior,  $\M_d(\u)\succ0$ and $\M_d(\v)\succ0$.
Indeed otherwise there would exists $0\neq p\in\R[\x]_d$ such that 
\[0\,=\,\langle \mathbf{p},\M_d(\u)\,\mathbf{p}\rangle\,=\,   \int p^2d\mu\quad\Rightarrow \quad p=0\:\mbox{ a.e. on $\om$.}\]
But then $p=0$ since it has to vanish on some open set of $\om$. The same argument also works for $\v$.
Hence Slater's condition holds for (\ref{scheme-1-primal}) since $(\u,\v)$ is  strictly feasible solution. By a standard result in convex optimization,
this in turn implies $\ov_d=\ov_d^*$ and moreover (\ref{scheme-1-dual}) has an optimal solution if $\ov_d^*$ is finite. We next prove that
(\ref{scheme-1-primal}) has an optimal solution and therefore so does (\ref{scheme-1-dual}). Observe that from the constraint
$u_\alpha+v_\alpha=y_\alpha$ we deduce that $u_0\leq y_0$, $v_0\leq y_0$. Moreover all (nonnegative) diagonal elements of $\M_d(\u)$
and $\M_d(\v)$ are dominated by those of $\M_d(\y)$. In particular:
\[\max_i[\,L_\u(x_i^{2d})\,]\,\leq\,\max_i[\,L_\y(x_i^{2d})\,];\quad\max_i[\,L_\v(x_i^{2d})\,]\,\leq\,\max_i[\,L_\y(x_i^{2d})\,].\]
From Lasserre \cite[Proposition 2.38]{lass-camb},
$\vert u_\alpha\vert\leq\tau_d$ and $\vert v_\alpha\vert\leq\tau_d$, for all $\alpha\in\N^n_{2d}$
(where $\tau_d:=\max[y_0,\max_i[\,L_\y(x_i^{2d})\,]$). Therefore the (closed) feasible of (\ref{scheme-1-primal}) is bounded, hence compact,
which in turn implies that (\ref{scheme-1-primal}) has an optimal solution. 

(b) That the sequence $(\ov_d)$, $d\in\N$, is monotone non-increasing is straightforward. Next, let $(\u^d,\v^d)$ be an optimal solution of (\ref{scheme-1-primal}). 
Notice that from the proof of (a) we have seen that
$\vert u^d_\alpha\vert\leq \tau_d$ and $\vert v^d_\alpha\vert\leq \tau_d$ for all $\alpha\in\N^n_{2d}$. Completing with zeros
we now consider $\u^d$ and $\v^d$ as infinite vectors indexed by $\N^n$.
Therefore, for each infinite sequence $\u^d=(u^d_\alpha)$, $\alpha\in\N^n$, it holds:
\[u^d_0\leq y_0\quad\mbox{ and }\quad 2j-1\leq\vert\alpha\vert\leq 2j \:\Rightarrow\:\vert u^d_\alpha\vert\,\leq\,\tau_j, \quad\forall j=1,\ldots,\]
Similarly
\[v^d_0\leq y_0\quad\mbox{ and }\quad 2j-1\leq\vert\alpha\vert\leq 2j \:\Rightarrow\:\vert v^d_\alpha\vert\,\leq\,\tau_j, \quad\forall j=1,\ldots,\]
Hence by  a standard argument there exists a subsequence $d_k$ and two infinite sequences
$\u^*=(u^*_\alpha)$ and $\v^*=(v^*_\alpha)$, $\alpha\in\N^n$, such that
\begin{equation}
\label{convergence}
\mbox{for every $\alpha\in\N^n$,}\quad u^{d_k}_\alpha\to u^*_\alpha\quad\mbox{and}\quad v^{d_k}_\alpha\to v^*_\alpha, \quad\mbox{as $k\to\infty$.}
\end{equation}
In particular $\lim_{k\to\infty}\ov_{d_k}=\lim_{k\to\infty}L_{\u^{d_k}}(f)=L_{\u^*}(f)$, and as the sequence 
$(\ov_d)$, $d\in\N$, is monotone:
\begin{equation}
\label{aux33}
\int_\om fd\mu \,\leq\,\lim_{d\to\infty}\ov_d\,=\,\lim_{k\to\infty}\ov_{d_k}\,=\,L_{\u^*}(f)
\quad\mbox{and}\quad u^*_\alpha+v^*_\alpha\,=\,y_\alpha,\qquad\forall\,\alpha\in\N^n.
\end{equation}

Then the convergence (\ref{convergence}) implies that for each fixed $k$, $\M_k(\u^*)\succeq0$, $\M_k(\v^*)\succeq0$, and 
$\M_k(g_j\,\u^*)\succeq0$ for all $j=1,\ldots,m$. Next observe that the moment sequence $\y$
of the Gaussian or exponential measure $\mu$ satisfies Carleman's condition (\ref{carleman}) and so
$\mu$ is moment determinate.
But from $L_{\u^*}(x_i^{2d})\leq L_{\y}(x_i^{2d})$ and $L_{\v^*}(x_i^{2d})\leq L_{\y}(x_i^{2d})$,
we deduce that both $\u^*$ and $\v^*$ also satisfy Carleman's condition.
This latter fact combined with $\M_k(\u^*)\succeq0$ and $\M_k(\v^*)\succeq0$ 
for all $k$, yields that $\u^*$ and $\v^*$ are the moment sequences of some
finite Borel measures $\phi^*$ and $\nu^*$ on $\R^n$; 
see for instance Lasserre \cite[Proposition 3.5]{lass-book-icp}

Next, as $\u^*$ satisfies $L_{\u^*}(x_i^{2d})\leq L_{\y}(x_i^{2d})$ and $\mu$ is the Gaussian or exponential measure,
there is some $M>0$ such that $L_\u(x_i^{2k})\leq M(2k){\rm !}$ for all $k$. 
As in addition $\M_{k}(g_j\,\u^*)\succeq0$, for all $k$, and all $j=1,\ldots,m$,
by Lasserre \cite[Theorem 2.2, p. 2494]{lass-tam},
the support of $\phi^*$ is contained in $\om$. Hence $\phi^*\in M(\om) $ and $\nu^*\in M(\R^n)$.
Moreover from (\ref{aux33}), 
\[\int  \x^\alpha \,d(\phi^*+\nu^*)\,=\,\int \x^\alpha\,d\mu,\qquad \forall\,\alpha\in\N^n,\]
and as $\mu$ is moment determinate it follows that $\phi^*+\nu^*=\mu$.
Hence the pair $(\phi^*,\nu^*)$ is feasible for problem (\ref{abstract-primal-new}) with value
$\int fd\phi^*=L_{\u^*}(f)\geq\int_\om fd\mu=\rho$, which proves that $(\phi^*,\nu^*)$ is an optimal solution
of problem (\ref{abstract-primal-new}), and so $\int fd\phi^*=\int_\om fd\mu$.

(c) When $f=1$ then $\ov_d=u^d_0$ and by (\ref{convergence}) $u^{d_k}_0\to\mu(\om)$ as $k\to\infty$.
Therefore by monotonicity of the sequence $(\ov_d)$, the result follows.
\end{proof}

\begin{rem}
{\rm (i) The fact that $\om$ is a basic semi-algebraic set of the form $\{\x: g_j(\x)\geq0,\:j=1,\ldots,m\}$ is exploited in the construction
of the semidefinite relaxation (\ref{scheme-1-primal}) through the psd constraints  $\M_{d-d_j}(g_j\,\u)\succeq0$, $j=1,\ldots,m$, on the localizing matrices associated with the $(g_j)$ that define $\om$.

(ii) We emphasize that the set $\om$ is {\em not} assumed to be compact. So Theorem \ref{th1} extends 
the methodology proposed in \cite{sirev} in two directions: Firstly, we now consider the Gaussian or exponential measure
instead of the Lebesgue measure and secondly, 
we now allow arbitrary (possibly non-compact) basic semi-algebraic sets $\om$ 
(whereas in \cite{sirev} $\om$  has to be compact) .

(iii) Solving (\ref{scheme-1-dual}) has a simple interpretation: Namely when $f=1$
one tries to approximate the indicator function $\x\mapsto 1_\om  (\x)$ ($=1$ if $\x\in\om  $ and $0$ otherwise)
by polynomials of increasing degree. It is well-known that a Gibbs effect occurs at points of the boundary of $\om$ .
So if we choose $\x\mapsto f(\x):=\prod_{j=1}^mg_j(\x)$ (so that $f$ is continuous, nonnegative on $\om$ , and 
vanishes on the boundary of $\om$) then the function $\x\mapsto f(\x)1_\om  (\x)$ is continuous, hence much easier
to approximate by polynomials. For compact sets $\om$ and 
the Lebesgue measure $\mu$, it has been observed on \cite{sirev} that this strategy strongly limits the Gibbs effect
and yields drastic improvements on the the convergence to $\mu(\om)$ (but this convergence is not monotone any more). 

However, when $f=1$ the monotone convergence $\ov_d\to\mu(\om)$ as $d\to\infty$ is 
an important attractive feature of the method because when convergence 
has not taken place yet, one still has the useful information that $\mu(\om)\leq \ov_d$,
which is important in some applications.
}
\end{rem}
We next show how even with $f=1$ one may still improve the convergence $\ov_d\to \mu(\om)$ significantly
(and thus still keep  a monotone sequence of upper bounds on $\mu(\om)$).
But before we show how to get a converging sequence of lower bounds on $\mu(\om)$ with same techniques.

\subsection{Lower bounds on $\mu(\om)$}
\label{lower}
Recall that if $\mu$ is the exponential measure then for practical computation in (\ref{scheme-1-primal}) one 
replaces $\om$ with $\om:=\om\cap\supmu=\om\cap\R^n_+$, that is,
$\om$ is defined in (\ref{newsetk}); see Remark \ref{support}.

Let $\om^c:=\supmu\setminus\om$ be the complement of $\om$ in the support of $\mu$.
As $\mu$ is absolutely continuous with respect to the Lebesgue measure,
the set $\om^c$ can be written as 
\begin{equation}
\label{complement}
\om^c\,=\,\displaystyle\cup_{\ell=1}^s\om^c_\ell \quad\mbox{with}\quad
\mu(\om^c)\,=\,\sum_{\ell=1}^s \mu(\om_\ell^c),
\end{equation}
where each $\om^c_\ell$ is a closed basic semi-algebraic set, $\ell=1,\ldots,s$,
and the overlaps between the sets $\om^c_\ell$ have $\mu$-measure zero.
Of course the decomposition
(\ref{complement}) is not unique.
For instance if $\mu$ is the Gaussian measure and
$\om=\{\x: g_j(\x)\geq0,\,j=1,2\}$ then $\om^c=\om^c_1\cup\om^c_2$ with:
\[\om^c_1\,=\,\{\x: g_1(\x)\,\leq\,0\,\};\quad \om^c_2\,:=\,\{\x: g_1(\x)\,\geq\,0;\:g_2(\x)\leq0\}.\]
On the other hand if $\mu$ is the exponential measure and
$\om=\{\x\geq0: g_j(\x)\geq0,\,j=1,2\}$ then $\om^c=\om^c_1\cup\om^c_2$ with:
\[\om^c_1\,=\,\{\x\geq0: g_1(\x)\,\leq\,0\,\};\quad \om^c_2\,:=\,\{\x\geq0: g_1(\x)\,\geq\,0;\:g_2(\x)\leq0\}.\]
Then write 
\begin{equation}
\label{def-olc}
\om^c_\ell\,=\,\{\x:\:g_{\ell j}(\x)\geq0,\:j=1,\ldots,m_\ell\,\},\qquad \ell=1,\ldots,s.
\end{equation}
for some integer $m_\ell$ and some polynomials $(g_{\ell j})\subset\R[\x]$, $j=1,\ldots,m_\ell$.
Again let $d_j:=\lceil({\rm deg} \,g_{\ell j})/2\rceil$ and $d_0=\max_j d_j$.
\begin{cor}
\label{cor-1}
Let $\om  \subset\R^n$ be as in (\ref{setk}) (or as in (\ref{newsetk}) when $\mu$ is the exponential measure). Assume that
both $\om$  and $\om^c$ have nonempty interior and let $\om^c$ be as in (\ref{complement}).

(a) If $f=1$ then for each $\ell=1,\ldots,s$, let $\ov^c_{\ell d}$ be the optimal value of the semidefinite 
program (\ref{scheme-1-primal}) with $g_{\ell j}$ in lieu of $g_j$ (and $m_\ell$ in lieu of $m$), and
for every $d\geq d_0$, let :
\begin{equation}
\label{cor-1-1}
\und_d\,:=\,\mu(\R^n)-\left(\sum_{\ell=1}^s\ov^c_{\ell d}\,\right).
\end{equation}
Then the sequence $\und_d$, $d\in\N$, is monotone non-decreasing
with $\mu(\om)\geq\und_d$ for all $d\geq d_0$ and 
$\und_d\to\mu(\om)$ as $d\to\infty$.

(b) If $f\neq 1$ and $f$ vanishes on $\partial\om$, then for each $\ell=1,\ldots,s$, let $\ov^c_{\ell d}$ be the optimal value of the semidefinite program (\ref{scheme-1-primal}) with $f^2$ and $g_{\ell j}$ in lieu of $f$ and $g_j$ (and $m_\ell$ in lieu of $m$), and for every $d\geq d_0$, let :
\begin{equation}
\label{cor-1-2}
\und_d\,:=\,\int f^2d\mu-\left(\sum_{\ell=1}^s\ov^c_{\ell d}\,\right).
\end{equation}
Then the sequence $\und_d$, $d\in\N$, is monotone non-decreasing
with $\int_\om f^2d\mu\geq\und_d$ for all $d\geq d_0$ and 
$\und_d\to\int_\om f^2d\mu$ as $d\to\infty$.
\end{cor}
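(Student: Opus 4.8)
The plan is to deduce both statements directly from Theorem \ref{th1}, applied not to $\om$ itself but separately to each piece $\om^c_\ell$ of the complement, thereby converting monotone upper bounds on $\mu(\om^c)$ into monotone lower bounds on $\mu(\om)$.

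For part (a), I would first record the elementary identity behind the construction: since $\om$ and $\om^c=\supmu\setminus\om$ cover $\supmu$ with a $\mu$-null overlap, and $\mu$ charges no set outside its support, one has $\mu(\om)=\mu(\R^n)-\mu(\om^c)$, where $\mu(\R^n)$ is a known constant ($1$ for the Gaussian, $(2\pi)^{-n/2}$ for the exponential on $\R^n_+$). I would then apply Theorem \ref{th1}(c) with $f=1$ to each $\om^c_\ell$ in place of $\om$ (using the defining polynomials $g_{\ell j}$ and, in the exponential case, carrying the support constraints $x_i\ge0$ as in Remark \ref{support}). Its hypotheses hold: each relevant $\om^c_\ell$ has nonempty interior (any piece with empty interior has $\mu$-measure zero and may be dropped), while $\supmu\setminus\om^c_\ell$ contains the interior of $\om$ and so has nonempty interior as well. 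Theorem \ref{th1}(c) then yields that each $\ov^c_{\ell d}$ is non-increasing in $d$, satisfies $\ov^c_{\ell d}\ge\mu(\om^c_\ell)$, and converges to $\mu(\om^c_\ell)$.

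It then remains only to sum and subtract. Using the additivity $\mu(\om^c)=\sum_{\ell=1}^s\mu(\om^c_\ell)$ from (\ref{complement}), the finite sum $\sum_\ell\ov^c_{\ell d}$ is non-increasing, dominates $\mu(\om^c)$, and converges to $\mu(\om^c)$. Substituting into (\ref{cor-1-1}) and invoking $\mu(\om)=\mu(\R^n)-\mu(\om^c)$ shows that $\und_d$ is non-decreasing, satisfies $\und_d\le\mu(\om)$, and converges to $\mu(\om)$, which is exactly (a). For part (b) the argument is identical after replacing the objective $1$ by $f^2$: the point of squaring is that $f$, though strictly positive a.e. on $\om$, may change sign on $\om^c$, whereas Theorem \ref{th1} needs a nonnegative integrand on the domain; since $f$ is a nonzero polynomial, $f^2>0$ a.e. on $\R^n$ and hence a.e. on each $\om^c_\ell$, so Theorem \ref{th1}(b) applies verbatim with objective $f^2$, giving $\ov^c_{\ell d}\downarrow\int_{\om^c_\ell}f^2d\mu$ with $\ov^c_{\ell d}\ge\int_{\om^c_\ell}f^2d\mu$. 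Writing $\int f^2d\mu=\int_\om f^2d\mu+\sum_\ell\int_{\om^c_\ell}f^2d\mu$ (the total integral being computable from the known moments of $\mu$, as $f^2\in\R[\x]$) and substituting into (\ref{cor-1-2}) yields the three claimed properties of $\und_d$ exactly as above.

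The only genuinely non-routine step is verifying the hypotheses of Theorem \ref{th1} piece by piece rather than for a single set: one must ensure each $\om^c_\ell$ entering an SDP has nonempty interior, so that Slater's condition and hence the monotone-convergence conclusion of Theorem \ref{th1} remain available for every subproblem. Once that is in place, everything else is a purely formal, order-preserving (respectively order-reversing) transfer of the conclusions of Theorem \ref{th1} through the finite sum over $\ell$ and the subtraction from the known constant $\mu(\R^n)$ (respectively $\int f^2d\mu$).
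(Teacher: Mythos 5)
Your proposal is correct and follows essentially the same route as the paper: apply Theorem \ref{th1} to each piece $\om^c_\ell$ of the decomposition (\ref{complement}), sum the resulting monotone upper bounds on $\mu(\om^c_\ell)$ (resp. $\int_{\om^c_\ell}f^2d\mu$), and subtract from the known total $\mu(\R^n)$ (resp. $\int f^2d\mu$). The only difference is that you make explicit two points the paper leaves implicit --- checking the nonempty-interior hypothesis for each $\om^c_\ell$ and explaining why $f$ is squared in part (b) --- which is a welcome but minor addition.
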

\begin{proof}
(a) By construction of the semidefinite program (\ref{scheme-1-primal}),
$\ov^c_{\ell d}\geq \mu(\om^c_\ell)$ for every $d\geq d_0$ and every $\ell=1,\ldots,s$ and so
\[\sum_{\ell=1}^s \ov^c_{\ell d}\,\geq\,\sum_{\ell=1}^s \mu(\om^c_\ell)\,=\, \mu(\cup_\ell\om^c_\ell)\,=\,
\mu(\om^c),\]
which in turn implies 
\begin{equation}
\label{aux44}
\und_d\,=\,\mu(\R^n)-\sum_{\ell=1}^s \ov^c_{\ell d}\,\leq\,\mu(\R^n)-\mu(\om^c)\,=\,\mu(\om),\quad\forall d.\end{equation}
Next from Theorem \ref{th1}(b) one has $\ov^c_{\ell d}\to\mu(\om^c_\ell)$ as $d\to\infty$, for every $\ell=1,\ldots,s$. Therefore
using (\ref{complement}) and taking limit in (\ref{aux44}) as $d\to\infty$, yields the desired result. 

(b) Similarly, by construction of the semidefinite program (\ref{scheme-1-primal}),
$\ov^c_{\ell d}\geq \int_{\om^c_\ell} f^2d\mu$ for every $d\geq d_0$ and every $\ell=1,\ldots,s$ and so
\[\sum_{\ell=1}^s \ov^c_{\ell d}\,\geq\,\sum_{\ell=1}^s \int_{\om^c_\ell}f^2d\mu\,
\geq\,\int_{\cup_\ell \om^c_\ell}f^2d\mu\,=\, \int_{\om^c} f^2d\mu,\]
which in turn implies 
\begin{equation}
\label{aux444}
\und_d\,=\,\int f^2d\mu-\sum_{\ell=1}^s \ov^c_{\ell d}\,\leq\,\int f^2d\mu-\int_{\om^c}f^2d\mu\,=\,\int_\om f^2 d\mu,\quad\forall d.\end{equation}
Next from Theorem \ref{th1}(b) one has $\ov^c_{\ell d}\to\mu(\om^c_\ell)$ as $d\to\infty$, for every $\ell=1,\ldots,s$. Therefore
using (\ref{complement}) and taking limit in (\ref{aux44}) as $d\to\infty$, yields the desired result. 
\end{proof}

As a consequence of Theorem \ref{th1} and  Corollary \ref{cor-1}, we finally obtain:
\begin{cor}
\label{cor-upper-lower}
Let $\om$ be as in (\ref{setk}) (or as in (\ref{newsetk}) when $\mu$ is the exponential measure)
and $\ov_d$ be as in (\ref{scheme-1-primal}). If $f=1$ then
\begin{equation}
\label{conclusion-1}
\und_{d}\,\leq\, \mu(\om)\,\leq\, \ov_d,\quad \forall d\geq d_0\quad\mbox{and}\quad  
\lim_{d\to\infty}\,\und_{d}\,=\,\mu(\om)\,=\,\lim_{d\to\infty}\,\ov_{d}.
\end{equation}
where $\und_d$ is defined in (\ref{cor-1-1}).

Similarly let $f\in\R[\x]$ vanish on $\partial\om$. If
$\ov_d$ in (\ref{scheme-1-primal}) is computed with $f^2$ (in lieu of $f$) then
\begin{equation}
\label{conclusion-2}
\und_{d}\,\leq\, \int_\om f^2d\mu\,\leq\, \ov_d,\quad \forall d\geq d_0\quad\mbox{and}\quad  
\lim_{d\to\infty}\,\und_{d}\,=\,\int_\om f^2d\mu\,=\,\lim_{d\to\infty}\,\ov_{d}.
\end{equation}
where $\und_d$ is defined in (\ref{cor-1-2}).
\end{cor}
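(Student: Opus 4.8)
The plan is to obtain Corollary \ref{cor-upper-lower} as a direct synthesis of the upper bounds furnished by Theorem \ref{th1} and the lower bounds furnished by Corollary \ref{cor-1}; essentially all the analytic work has already been carried out in those two results, so what remains is to verify that their hypotheses are met simultaneously and then to read off the two-sided inequality and the common limit.

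First I would treat the case $f=1$. The upper bound comes directly from Theorem \ref{th1}(c): under the standing assumptions that both $\om$ and $\supmu\setminus\om$ have nonempty interior, the sequence $(\ov_d)$ is monotone non-increasing with $\mu(\om)\leq\ov_d$ for every $d\geq d_0$ and $\ov_d\to\mu(\om)$ as $d\to\infty$. For the matching lower bound I would invoke Corollary \ref{cor-1}(a), which under the same nonempty-interior hypotheses produces the monotone non-decreasing sequence $\und_d$ of (\ref{cor-1-1}) satisfying $\und_d\leq\mu(\om)$ for all $d\geq d_0$ and $\und_d\to\mu(\om)$. Sandwiching the two bounds gives $\und_d\leq\mu(\om)\leq\ov_d$ for every $d\geq d_0$ and, on letting $d\to\infty$, the common limit $\mu(\om)$; this is precisely (\ref{conclusion-1}).

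Next I would handle the case where $f$ vanishes on $\partial\om$ and $\ov_d$ is computed with $f^2$ in place of $f$. The one point requiring a check is that Theorem \ref{th1} applies to the integrand $f^2$, i.e.\ that $f^2$ is strictly positive $\mu$-almost everywhere on $\om$. Since $\om$ has nonempty interior and $\mu$ is absolutely continuous with respect to the Lebesgue measure, the set $\partial\om$ on which $f$ may vanish carries no $\mu$-mass; hence $f^2>0$ $\mu$-almost everywhere on $\om$, and Theorem \ref{th1}(b) applied to $f^2$ yields $\int_\om f^2\,d\mu\leq\ov_d$ together with $\ov_d\to\int_\om f^2\,d\mu$. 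The lower bound is then Corollary \ref{cor-1}(b), which produces the monotone non-decreasing $\und_d$ of (\ref{cor-1-2}) with $\und_d\leq\int_\om f^2\,d\mu$ and $\und_d\to\int_\om f^2\,d\mu$. Combining the two inequalities gives (\ref{conclusion-2}).

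Because both halves of the statement are quotations of already-established monotone convergence results, there is no genuine obstacle in the argument itself; the only delicate point is the bookkeeping of hypotheses. In particular I would make sure that the decomposition (\ref{complement}) of $\om^c=\supmu\setminus\om$ into the closed basic semi-algebraic pieces $\om^c_\ell$ used to define $\und_d$ is in force, and that the nonempty-interior assumptions on $\om$ and on $\om^c$ (needed for the strict-feasibility/Slater step inside Theorem \ref{th1}) are available, so that Corollary \ref{cor-1} can be applied verbatim to each piece $\om^c_\ell$.
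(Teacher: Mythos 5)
Your proposal is correct and matches the paper exactly: the paper states this corollary as an immediate consequence of Theorem \ref{th1} and Corollary \ref{cor-1} with no further proof, which is precisely the synthesis you carry out. Your extra check that $f^2>0$ $\mu$-a.e.\ on $\om$ is a welcome addition (though the cleaner justification is that the zero set of a nonzero polynomial is Lebesgue-null, since nothing prevents $f$ from vanishing inside $\om$ as well as on $\partial\om$).
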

Why can (\ref{conclusion-2}) be also potentially interesting? Recall that any optimal solution
$\u^d$ of (\ref{scheme-1-primal}) is such that $u^d_0\to \mu(\om)$ as $d\to\infty$. So if in (\ref{conclusion-2})
$\ov_d-\und_d$ is small then it is a good indication that $u^d_0\approx\mu(\om)$.

\section{Improving convergence}
\label{method}
Corollary \ref{cor-upper-lower} states nothing on the rate of convergence for the sequences
$(\ov_d)$ and $(\und_d)$, $d\in\N$, of upper and lower bounds on $\mu(\om)$. 
As already observed in \cite{sirev} for compact sets $\om$
and the Lebesgue measure, when $f=1$ the convergence appears to be rather slow.
On the other hand, if $f$ is positive on $\om$ and vanishes on $\partial\om$
the convergence is significantly faster but one looses the non-increasing (resp. non-decreasing) monotonicity
of the sequence $\ov_d$ (resp. $\und_d$). Keeping the monotonicity is important
because no matter how close to $\mu(\om)$ is $\ov_d$ or $\und_d$,
one still has the useful information that $\und_d\leq\mu(\om)\leq\ov_d$ for all $d$.

In this section we show how to improve significantly this convergence while keeping the monotonicity
of the sequences of upper and lower bounds.
To do this we use Stokes' Theorem for integration and in the sequel, to avoid technicalities, we assume that 
$\om\subset\R^n$ is the closure of its interior, i.e., $\om=\overline{{\rm int}(\om)}$.

\subsection{Stokes helps}

As we already know in advance that $(\mu_\om  ,\mu-\mu_\om) $ is an optimal solution of the infinite-dimensional linear program
(\ref{abstract-primal-new}), any additional information on the moments of $\mu_\om  $
will be helpful if when included as additional constraints in the relaxation (\ref{scheme-1-primal}),
one still has a semidefinite program to solve.  Fortunately for basic semi-algebraic sets $\om$
this is possible. Indeed suppose for the moment that
$\om$  is compact with smooth boundary
$\partial\om$ and let $X$ be some given vector field.
Let $\theta_\mu$ be the density of the Gaussian or exponential measure $\mu$ and $f\in\R[\x]$
a given polynomial. (In case where $\mu$ is the exponential measure then one replaces $\om$ with $\om\cap\R^n_+$.)
By Stokes' Theorem, for every $\alpha\in\N^n$:
\[\int_\om   {\rm Div}(X)\,\x^\alpha\,f(\x)\,\theta_\mu(\x)\,d\x+\int_\om   \langle X,\nabla (\x^\alpha f(\x)\theta_\mu(\x))\rangle\,d\x\]
\[\,=\,\int_{\partial\om  } \langle X,\vec{n}_\x\rangle\, \x^\alpha f(\x)\,\theta_\mu(\x)\,d\sigma\]
(where $\vec{n}_\x$ is the outward pointing normal at $\x\in\partial\om $ and $\sigma$ is the $(n-1)$-Hausdorff measure
on $\partial\om$). In fact
the above identity holds even if the boundary is algebraic and not smooth everywhere\footnote{A generalization of Stokes' Theorem is valid for smooth differential forms (see Whitney \cite[Theorem 14A]{whitney}, or Federer \cite{federer}). Such generalization of Stokes' Theorem has already been used in other contexts like e.g. in Barvinok \cite{barvinok} to provide formulae for exponential integrals on polyhedra (hence with ``corners" on the boundary).}.
Therefore if $f\in\R[\x]$ vanishes on the boundary $\partial\om$ then
\begin{equation}
\label{stokes-theorem}
\int_\om   {\rm Div}(X)\,\x^\alpha\,f(\x)\,\theta_\mu(\x)\,d\x+\int_\om   \langle X,\nabla (\x^\alpha f(\x)\theta_\mu(\x))\rangle\,d\x\,=\,0.\end{equation}
So for instance with $X:=e_i$ (with $e_i=(\delta_{j=i})\in\R^n$) (and since ${\rm Div}(X)=0$):
\begin{equation}
\label{stokes1}
\int_\om   \frac{\partial (\x^\alpha \,f(\x)\,\theta_\mu(\x))}{\partial x_i}\,d\x\,=\,0,
\qquad\forall\,\alpha\in\N^n,
\end{equation}
Equivalently, introduce the polynomials $p_{i,\alpha}\in\R[\x]$, $i=1,\ldots,n$ (of degree 
$d_\alpha={\rm deg}\,f+\vert\alpha\vert+1$ if $\mu$ is the Gaussian measure and
$d_\alpha={\rm deg}\,f+\vert\alpha\vert$ if $\mu$ is the exponential measure),
defined by:
\begin{equation}
\label{palpha-gaussian}
\x\mapsto p_{i,\alpha}(\x)\,:=\,\frac{\partial (\x^\alpha f)}{\partial x_i}-\x^\alpha \,f(\x)\,x_i,
\qquad\forall\,\alpha\in\N^n,\end{equation}
if $\mu$ is the Gaussian measure (\ref{mu-gauss}) and by
\begin{equation}
\label{palpha-exp}
\x\mapsto p_{i,\alpha}(\x)\,:=\,\frac{\partial (\x^\alpha f)}{\partial x_i}-\x^\alpha \,f(\x)
\qquad\forall\,\alpha\in\N^n,\end{equation}
if $\mu$ is the exponential measure (\ref{mu-exp}). Then we have the equality constraints
\begin{equation}
\label{stokes2}
\int p_{i,\alpha}\,d\mu_\om  \,=\,0,\qquad \forall\,\alpha\in\N^n,\:i=1,\ldots,n,
\end{equation}
which defines a {\it linear} constraint on the moments of $\mu_\om$.

\subsection{Extension to non-compact sets $\om$}

We next show that (\ref{stokes1}) (or, equivalently (\ref{stokes2})) extends to
non-compact semi-algebraic sets $\om$. 
\begin{lem}
\label{non-compact}
Let $\om\subset\R^n$ be as in (\ref{setk}) (not necessarily compact) 
or as in (\ref{newsetk}) if $\mu$ is the exponential measure
(\ref{mu-exp}). Let $f\in\R[\x]$ vanish on the boundary
$\partial\om$.
Then with $p_{i,\alpha}\in\R[\x]$ as in (\ref{palpha-gaussian}),
\begin{equation}
\int_{\om}   \frac{\partial (\x^\alpha \,f\,\exp(-\Vert\x\Vert^2/2))}{\partial x_i}\,d\x\,=\,
\int_{\om}p_{i,\alpha}\,d\mu\,=\,0,
\end{equation}
for every $\alpha\in\N^n$ and $i=1,\ldots,n$. Similarly, with $p_{i,\alpha}\in\R[\x]$ as in (\ref{palpha-exp}),
\begin{equation}
\int_{\om}   \frac{\partial (\x^\alpha \,f\,\exp(-\sum_{i=1}^n x_i)}{\partial x_i}\,d\x\,=\,
\int_{\om}p_{i,\alpha}\,d\mu\,=\,0,
\end{equation}
for every $\alpha\in\N^n$ and $i=1,\ldots,n$. 
\end{lem}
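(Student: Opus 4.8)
The plan is to reduce the non-compact case to the compact case already settled via Stokes' Theorem, by truncating $\om$ with large balls and letting the radius tend to infinity. The whole argument rests on the fact that the Gaussian (resp.\ exponential) density decays faster than any polynomial (resp.\ exponentially), so that the extra boundary term created by the truncation disappears in the limit; this is exactly the ingredient that replaces the compactness used for (\ref{stokes1}).

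First I would set $\om_R:=\om\cap\{\x:\Vert\x\Vert\leq R\}$, a compact basic semi-algebraic set for every $R>0$, and apply the divergence theorem to $\om_R$ with the constant field $X=e_i$ (so that ${\rm Div}\,X=0$). Even though $\partial\om_R$ is only piecewise algebraic—it acquires edges where $\partial\om$ meets the sphere—the generalized Stokes' Theorem cited above still applies and gives
\[
\int_{\om_R}\frac{\partial(\x^\alpha f\,\theta_\mu)}{\partial x_i}\,d\x\,=\,\int_{\partial\om_R}\langle e_i,\vec{n}_\x\rangle\,\x^\alpha f(\x)\,\theta_\mu(\x)\,d\sigma.
\]
Up to a $\sigma$-null set the boundary splits as $(\partial\om\cap\{\Vert\x\Vert\leq R\})\cup(\om\cap\{\Vert\x\Vert=R\})$. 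On the first piece $f$ vanishes by hypothesis, so it contributes nothing, and only the spherical cap survives.

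The hard part is to show that this spherical-cap term vanishes as $R\to\infty$. On $\{\Vert\x\Vert=R\}$ one has $\vert\langle e_i,\vec{n}_\x\rangle\vert\leq1$, $\vert\x^\alpha\vert\leq R^{\vert\alpha\vert}$, and $\vert f(\x)\vert\leq C(1+R^{{\rm deg}\,f})$, while the surface measure of the sphere grows only like $R^{n-1}$. In the Gaussian case $\theta_\mu(\x)=(2\pi)^{-n/2}\exp(-R^2/2)$ on the sphere, so the cap integral is bounded in absolute value by a constant times $R^{\vert\alpha\vert+{\rm deg}\,f+n-1}\exp(-R^2/2)$, which tends to $0$. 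In the exponential case $\om\subseteq\R^n_+$, and since $(\sum_i x_i)^2\geq\sum_i x_i^2=R^2$ on $\R^n_+\cap\{\Vert\x\Vert=R\}$, one has $\theta_\mu(\x)\leq(2\pi)^{-n/2}\exp(-R)$ there, so the cap integral is bounded by a constant times $R^{\vert\alpha\vert+{\rm deg}\,f+n-1}\exp(-R)$, again tending to $0$.

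Finally I would pass to the limit on the left-hand side. Expanding the derivative by the product rule and using $\partial\theta_\mu/\partial x_i=-x_i\theta_\mu$ (Gaussian) or $\partial\theta_\mu/\partial x_i=-\theta_\mu$ (exponential) turns the integrand into $p_{i,\alpha}\,\theta_\mu$, with $p_{i,\alpha}$ exactly as in (\ref{palpha-gaussian}) resp.\ (\ref{palpha-exp}); in particular $\int_{\om_R}\frac{\partial(\x^\alpha f\,\theta_\mu)}{\partial x_i}\,d\x=\int_{\om_R}p_{i,\alpha}\,d\mu$. Since $p_{i,\alpha}\,\theta_\mu$ is a polynomial times the density, it is absolutely integrable over $\R^n$, so dominated convergence along $\om_R\uparrow\om$ gives $\int_{\om_R}p_{i,\alpha}\,d\mu\to\int_\om p_{i,\alpha}\,d\mu$. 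Combining this with the vanishing of both boundary pieces yields $\int_\om p_{i,\alpha}\,d\mu=0$, which is the asserted identity in each of the two cases.
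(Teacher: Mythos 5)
Your proof is correct and follows essentially the same route as the paper: truncate $\om$ by balls of radius $R$, apply the generalized Stokes/divergence theorem with $X=e_i$, kill the $\partial\om$ contribution using the vanishing of $f$, bound the spherical-cap term by a polynomial times $\exp(-R^2/2)$ (resp.\ $\exp(-R)$), and conclude by dominated convergence. The only difference is that you spell out the exponential case (via $\sum_i x_i\geq R$ on $\R^n_+\cap\{\Vert\x\Vert=R\}$), which the paper leaves to the reader as ``similar arguments.''
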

\begin{proof}
We only consider the case of the Gaussian measure as the proof for the exponential measure uses similar
arguments.
For $M>0$ arbitrary, fixed, let $\B_{M}:=\{\x:\Vert\x\Vert^2\leq M^2\,\}$
and consider the compact set $\om_M:=\om\cap\B_{M}$.
Write $\partial\om_M:=\partial\om_M^1\cup\partial\om_M^2$, where
\[\partial\om_M^1\,:=\,\partial\om\,\cap\,\{\,\x:\Vert\x\Vert^2\leq M^2\,\};\quad
\partial\om_M^2\,:=\,\om\,\cap\,\{\,\x:\Vert\x\Vert^2\,=\,M^2\,\}.\]

Even though the boundary $\partial\om_M$ is not smooth everywhere, Stokes' Theorem still applies (see previous footnote) and so
if $X=e_i$ and if $f$ vanishes on $\partial\om$,
(\ref{stokes1}) reads
\begin{eqnarray*}
\int_{\om_M}   \frac{\partial (\x^\alpha \,f\,\exp(-\Vert\x\Vert^2/2))}{\partial x_i}\,d\x&=&
\int_{\om_M}p_{i,\alpha}\,d\mu\\
&=&\underbrace{
\int_{\partial\om_M^1}\langle e_i,\vec{n}_\x\rangle\, \x^\alpha\,f\exp(-\Vert\x\Vert^2/2)\,d\sigma}_{=0}\\
&&+\int_{\partial\om_M^2}\langle e_i,\vec{n}_\x\rangle\,\x^\alpha\, f\exp(-\Vert\x\Vert^2/2)\,d\sigma\\
&=&\frac{\exp(-M^2/2)}{M}\,\int_{\partial\om_M^2}x_i\,\x^\alpha\,f\,\,d\sigma
\end{eqnarray*}
and so
\[\left\vert\int_{\om_M} p_{i,\alpha}\,d\mu\,\right\vert
\leq\,\frac{2\,\pi^{\frac{n+1}{2}}\Vert f\Vert_1\,M^{\vert\alpha\vert+n+{\rm deg}f}}{\Gamma((n+1)/2)}\,\,\exp(-M^2/2),\]
because $\sup\{\vert f(\x)\vert:\Vert\x\Vert^2=M^2\}\leq \Vert f\Vert_1\,\sup\{\vert\x^\alpha\vert:\x\in\B_M\}\leq \Vert f\Vert_1M^{{\rm deg}f}$,
with $\Vert f\Vert_1=\sum_\alpha \vert f_\alpha\vert$. This in turn implies
\begin{equation}
\label{aux333}
\lim_{M\to\infty}\,\int_{\om_M} p_{i,\alpha}\,d\mu\,=\,0,\qquad\forall\alpha\in\N^n;\quad i=1,\ldots,n.
\end{equation}
It remains to show that
\[\int_{\om_M} p_{i,\alpha}\,d\mu\:\to\:\int_\om p_{i,\alpha}\,d\mu\quad\mbox{ as $M\to\infty$}.\]
Observe that
\begin{eqnarray*}
\left\vert p_{i,\alpha}(\x)\right\vert I_{\om_M}(\x)\,\exp(-\Vert\x\Vert^2/2)
&\leq&\left\vert p_{i,\alpha}(\x)\right\vert\,\exp(-\Vert\x\Vert^2/2),\quad\forall \x\in\R^n,\\
\mbox{and}\quad\int \left\vert p_{i,\alpha}(\x)\right\vert\,\exp(-\Vert\x\Vert^2/2)\,d\x&<&\infty.
\end{eqnarray*}
In addition, for every $\x\in\R^n$:
\[p_{i,\alpha}(\x)\,I_{\om_M}(\x)\,\exp(-\Vert\x\Vert^2/2)\:\to\: 
p_{i,\alpha}(\x)\,I_{\om}(\x)\,\exp(-\Vert\x\Vert^2/2)\quad\mbox{as $M\to\infty$.}\]
Therefore by (\ref{aux333}) and invoking the Dominated Convergence Theorem \cite{ash} one obtains the desired result
\begin{eqnarray*}
0\,=\,\lim_{M\to\infty}\int_{\om_M} p_{i,\alpha}\,d\mu&=&
\lim_{M\to\infty}\int p_{i,\alpha}(\x)\,I_{\om_M}(\x)\,\exp(-\Vert\x\Vert^2/2)\,d\x\\
&=&\int p_{i,\alpha}(\x)\,I_{\om}(\x)\,\exp(-\Vert\x\Vert^2/2)\,d\x\,=\,
\int_\om p_{i,\alpha}\,d\mu.
\end{eqnarray*}
\end{proof}

So with $p_{i,\alpha}\in\R[\x]$ as in (\ref{palpha-gaussian}) (or (\ref{palpha-exp}))
and $r(d):=2d-1$ (or $r(d):=2d$), we can now consider the new semidefinite program:
\begin{equation}
\label{scheme-3-primal}
\begin{array}{rl}
\overline{\omega}_d\,=\,\displaystyle\sup_{\u,\v\in\R[\x]_{2d}^*}&\{\,u_0:\:\mbox{ s.t. }
u_\alpha+v_\alpha\,=\,y_\alpha,\quad\forall\,\alpha\in\N^n_{2d};\\
&\M_d(\u),\,\M_d(\v)\succeq0;\:\M_{d-d_j}(g_j\,\u)\succeq0,\quad j=1,\ldots,m,\\
&\\
&L_\u(p_{i,\alpha})\,=\,0,\qquad \forall\,\alpha\,\in\N^n_{r(d)};\:i=1,\ldots,n\,\},
\end{array}
\end{equation}
which is a relaxation of the infinite-dimensional linear program (\ref{abstract-primal-new}) (with $f=1$). The dual of (\ref{scheme-3-primal})
is the semidefinite program:
\begin{equation}
\label{scheme-3-dual}
\begin{array}{rl}
\overline{\omega}^*_d\,=\,\displaystyle\inf_{p\in\R[\x]_{2d},\boldsymbol{\theta}}&\,\{\,\displaystyle \int  p\,d\mu:\: \mbox{s.t.}
\quad\boldsymbol{\theta}\in\R^{nm(d)},\:p\in\Sigma[\x]_d;\\
&p+\displaystyle\sum_{i=1}^n\sum_{\alpha\in\N^n_{m(d)}}
\theta_{i\alpha}\,p_{i,\alpha}-1\in Q_{2d}(\om),
\end{array}
\end{equation}
where $m(d)={n+r(d)\choose n}$ and $\boldsymbol{\theta}=(\theta_{i,\alpha})\in\R^{nm(d)}$ is the vector of dual variables associated with the equality constraints $L_\u(p_{i,\alpha})=0$, $\alpha\in\N^n_{r(d)}$, $i=1,\ldots,n$.

Again, if $\mu$ is the exponential measure (\ref{mu-exp}) then $\om$ is as in (\ref{newsetk})
and so in (\ref{scheme-3-primal}) the constraints $\M_{d-d_j}(g_j\,\u)\succeq0$, $j=1,\ldots,m$, become
$\M_{d-d_j}(g_j\,\u)\succeq0$, $j=1,\ldots,m+n$; see Remark \ref{support}.
\begin{thm}
\label{th3}
Let $\om  \subset\R^n$ be as in (\ref{setk}) (or as in (\ref{newsetk}) if $\mu$ is the exponential measure) and assume that
both $\om$  and $\supmu\setminus\om  $ have nonempty interior. Then:

(a) Both problems (\ref{scheme-3-primal}) and its dual
have an optimal solution and $\overline{\omega}_d=\overline{\omega}^*_d$ for every $d\geq\max_jd_j$.

(b) The sequence $(\overline{\omega}_d)$, $d\in\N$, is monotone non-increasing and 
$\overline{\omega}_d\to\mu(\om)$ as $d\to\infty$.
\end{thm}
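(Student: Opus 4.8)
The plan is to treat (\ref{scheme-3-primal}) as the relaxation (\ref{scheme-1-primal}) with $f=1$, augmented by the finitely many linear constraints $L_\u(p_{i,\alpha})=0$, $\alpha\in\N^n_{r(d)}$, $i=1,\ldots,n$, and to reuse the arguments of Theorem \ref{th1} almost verbatim. The only genuinely new point is that these extra equalities are compatible with the strictly feasible point already exhibited in Theorem \ref{th1}, and that they shrink the feasible set without changing the limiting optimal value.

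For part (a) I would reuse the strictly feasible point from Theorem \ref{th1}(a): let $\u$ (resp. $\v$) be the truncated moment sequence of $\mu_\om$ (resp. $\mu-\mu_\om$). Because $\om$ and $\supmu\setminus\om$ have nonempty interior, exactly as in Theorem \ref{th1}(a) one gets $\M_d(\u)\succ0$ and $\M_d(\v)\succ0$. The new ingredient is Lemma \ref{non-compact}: the moments of $\mu_\om$ satisfy $L_\u(p_{i,\alpha})=\int_\om p_{i,\alpha}\,d\mu=0$ for all $\alpha$ and $i$, so this same point also satisfies the additional equality constraints of (\ref{scheme-3-primal}). Hence Slater's condition still holds, yielding $\overline{\omega}_d=\overline{\omega}^*_d$ together with attainment of the dual optimum. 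For primal attainment, the domination $\M_d(\u)\preceq\M_d(\y)$ forces the same bounds $\vert u_\alpha\vert,\vert v_\alpha\vert\leq\tau_d$ as in Theorem \ref{th1}(a) (the Stokes constraints play no role in this estimate), so the feasible set of (\ref{scheme-3-primal}) is a closed and bounded subset of that of (\ref{scheme-1-primal}), hence compact, and the continuous objective $u_0$ attains its maximum (in particular the common value is finite).

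For the monotonicity in part (b) I would use the usual truncation argument: a point feasible at level $d+1$, restricted to its moments of order at most $2d$, is feasible at level $d$. One only has to check that the Stokes constraints behave well under truncation, i.e. that for $\alpha\in\N^n_{r(d)}$ the polynomial $p_{i,\alpha}$ has degree at most $2d$ so that $L_\u(p_{i,\alpha})$ involves only moments available at level $d$; this holds since $r(d)=2d-1$ and $\deg p_{i,\alpha}=\vert\alpha\vert+1$ in the Gaussian case (resp. $r(d)=2d$ and $\deg p_{i,\alpha}=\vert\alpha\vert$ in the exponential case). The level-$d$ constraints being a subset of the truncated level-$(d+1)$ ones, $\overline{\omega}_{d+1}\leq\overline{\omega}_d$. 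Convergence then follows by squeezing: the moments of $\mu_\om$ are feasible for (\ref{scheme-3-primal}) with value $u_0=\mu(\om)$ (again by Lemma \ref{non-compact}), so $\overline{\omega}_d\geq\mu(\om)$; and since (\ref{scheme-3-primal}) is obtained from (\ref{scheme-1-primal}) with $f=1$ by adding constraints, its feasible set is smaller and $\overline{\omega}_d\leq\ov_d$. As Theorem \ref{th1}(c) gives $\ov_d\to\mu(\om)$, the sandwich $\mu(\om)\leq\overline{\omega}_d\leq\ov_d\to\mu(\om)$ forces $\overline{\omega}_d\to\mu(\om)$.

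The main (and essentially only) obstacle is the first step of part (a): ensuring that the equality constraints $L_\u(p_{i,\alpha})=0$ do not destroy Slater's condition, since adding equalities generically costs strict feasibility. This is exactly where Lemma \ref{non-compact} is indispensable, as it guarantees that the interior point inherited from $\mu_\om$ still satisfies the new linear constraints; without it one could not conclude strong duality and dual attainment. Alternatively, one could bypass the squeeze and rerun the weak-limit and Carleman argument of Theorem \ref{th1}(b) on optimal solutions $(\u^d,\v^d)$ of (\ref{scheme-3-primal}), obtaining a feasible pair $(\phi^*,\nu^*)$ of (\ref{abstract-primal-new}) whose limiting moments moreover satisfy every Stokes relation; but this additional information is not needed to identify the limit of the objective.
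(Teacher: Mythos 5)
Your proposal is correct and follows essentially the same route as the paper, which proves (a) as an ``almost verbatim copy'' of Theorem \ref{th1} and obtains (b) from the sandwich $\mu(\om)\leq\overline{\omega}_d\leq\ov_d$ together with Theorem \ref{th1}(c). You in fact supply the details the paper leaves implicit --- notably that Lemma \ref{non-compact} is what keeps the Slater point feasible for the added Stokes constraints, and the degree count showing the level-$d$ Stokes constraints survive truncation --- so nothing is missing.
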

\begin{proof}
(a) follows from an almost verbatim copy of the proof of Theorem \ref{th1}. Concerning
(b) it suffices to observe that $\ov_d\geq\overline{\omega}_d\geq \mu(\om)$ for every $d\geq d_0$.
Therefore the convergence $\overline{\omega}_d\to\mu(\om)$ is a consequence of the convergence $\ov_d\to\mu(\om)$.
Finally the convergence is monotone since by construction the sequence
$(\overline{\omega}_d)$, $d\in\N$, is monotone non-increasing. 
\end{proof}

Next, exactly as we did in \S \ref{lower} we can provide an associated sequence
of lower bounds $(\underline{\omega}_d)$, $d\in\N$, by considering the complement 
$\om^c:=\supmu\setminus\om$ in $\supmu$ 
(with $\om:=\om\cap\supmu$) and its decomposition (\ref{complement}). Then the analogue of Corollary
\ref{cor-1} reads:
\begin{cor}
\label{cor-2}
Let $\om  \subset\R^n$ be as in (\ref{setk}). Assume that
both $\om$  and $\om^c$ have nonempty interior and let $\om^c$ be as in (\ref{complement}).
For each $\ell=1,\ldots,s$, let $\overline{\omega}^c_{\ell d}$ be the optimal value of the semidefinite 
program (\ref{scheme-3-primal}) with $g_{\ell j}$ in lieu of $g_j$ and $m_\ell$ in lieu of $m$. 
For every $d\geq d_0$ let :
\begin{equation}
\label{cor-2-1}
\underline{\omega}_d\,:=\,\mu(\R^n)-\left(\sum_{\ell=1}^s\overline{\omega}^c_{\ell d}\,\right).
\end{equation}
Then the sequence $\underline{\omega}_d$, $d\in\N$, is monotone non-decreasing
with $\mu(\om)\geq\underline{\omega}_d$ for all $d\geq d_0$ and 
$\underline{\omega}_d\to\mu(\om)$ as $d\to\infty$.
\end{cor}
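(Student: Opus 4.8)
The plan is to follow the proof of Corollary~\ref{cor-1}(a) almost verbatim, with Theorem~\ref{th3} playing the role of Theorem~\ref{th1}. The only point requiring genuine attention is that (\ref{scheme-3-primal}), when transplanted onto a piece $\om^c_\ell$, must still be a valid relaxation of the corresponding instance of (\ref{abstract-primal-new}); concretely, the added Stokes equalities $L_\u(p_{i,\alpha})=0$ must not cut off the true moment sequence of $\mu_{\om^c_\ell}$. This is exactly what Lemma~\ref{non-compact} guarantees, provided the polynomial generating those constraints vanishes on $\partial\om^c_\ell$ (for instance one may take $\prod_j g_{\ell j}$, which vanishes wherever an active constraint does).

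Granting this, I would first record the relaxation inequality. Since (\ref{scheme-3-primal}) applied to $\om^c_\ell$ is a relaxation whose feasible set still contains the moment sequence of $\mu_{\om^c_\ell}$, its optimal value satisfies $\overline{\omega}^c_{\ell d}\geq\mu(\om^c_\ell)$ for every $d\geq d_0$ and every $\ell$. Summing over $\ell$ and using the decomposition (\ref{complement}) (whose overlaps are $\mu$-negligible) gives
\[\sum_{\ell=1}^s\overline{\omega}^c_{\ell d}\,\geq\,\sum_{\ell=1}^s\mu(\om^c_\ell)\,=\,\mu(\om^c),\]
whence
\[\underline{\omega}_d\,=\,\mu(\R^n)-\sum_{\ell=1}^s\overline{\omega}^c_{\ell d}\,\leq\,\mu(\R^n)-\mu(\om^c)\,=\,\mu(\om),\qquad\forall d\geq d_0.\]

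For convergence and monotonicity I would invoke Theorem~\ref{th3}(b) on each piece, which first requires checking its hypotheses for $\om^c_\ell$: that both $\om^c_\ell$ and $\supmu\setminus\om^c_\ell$ have nonempty interior. The latter is clear because the (nonempty) interior of $\om$ is disjoint from each $\om^c_\ell$---these pieces being carved out by the complementary inequalities---and therefore lies in $\supmu\setminus\om^c_\ell$; for the former, any piece with empty interior has $\mu(\om^c_\ell)=0$ (as $\mu$ is absolutely continuous with respect to Lebesgue measure) and may simply be discarded, so without loss of generality each $\om^c_\ell$ has nonempty interior. Theorem~\ref{th3}(b) then yields both $\overline{\omega}^c_{\ell d}\to\mu(\om^c_\ell)$ and that each $(\overline{\omega}^c_{\ell d})$ is monotone non-increasing in $d$. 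Passing to the limit,
\[\lim_{d\to\infty}\underline{\omega}_d\,=\,\mu(\R^n)-\sum_{\ell=1}^s\mu(\om^c_\ell)\,=\,\mu(\R^n)-\mu(\om^c)\,=\,\mu(\om),\]
while the non-increasing behavior of each summand makes $\sum_\ell\overline{\omega}^c_{\ell d}$ non-increasing, hence $\underline{\omega}_d$ non-decreasing, as claimed.

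The genuinely substantive step---the \emph{main obstacle}---is the validity of the transplanted Stokes constraints asserted in the first paragraph, i.e.\ verifying that the stronger scheme (\ref{scheme-3-primal}) still upper-bounds $\mu(\om^c_\ell)$. Once Lemma~\ref{non-compact} is applied to each $\om^c_\ell$, the remainder is identical bookkeeping to Corollary~\ref{cor-1}(a), now driven by Theorem~\ref{th3} in place of Theorem~\ref{th1}.
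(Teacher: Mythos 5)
Your proof is correct and follows exactly the route the paper intends: the paper omits the proof of this corollary, stating only that it is ``similar to that of Corollary~\ref{cor-1}'', and your argument is precisely that proof transplanted, with Theorem~\ref{th3} replacing Theorem~\ref{th1}. Your extra care in checking that the Stokes constraints remain valid on each piece $\om^c_\ell$ (via Lemma~\ref{non-compact} applied with a polynomial such as $\prod_j g_{\ell j}$ vanishing on $\partial\om^c_\ell$) and that the interiority hypotheses of Theorem~\ref{th3} hold for each piece is a welcome addition rather than a departure.
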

The proof being similar to that of Corollary \ref{cor-1} is omitted.
So again but now by Theorem \ref{th3} and  Corollary \ref{cor-2} one has :
\begin{equation}
\label{conclusion-11}
\underline{\omega}_{d}\,\leq\, \mu(\om)\,\leq\, \overline{\omega}_d,\quad \forall d\geq d_0\quad\mbox{and}\quad  
\lim_{d\to\infty}\,\underline{\omega}_{d}\,=\,\mu(\om)\,=\,\lim_{d\to\infty}\,\overline{\omega}_{d}.
\end{equation}

\section{Some numerical experiments}

In this section we provide some examples to illustrate the methodology developed in \S \ref{method}.
The examples are all 2D-examples as 3D-examples would have required some help
for implementation in GloptiPoly. Also and importantly,
for simplicity of implementation of  the semidefinite programs (\ref{scheme-3-primal})
we have chosen to express the moment and localizing matrices $\M_d(\z)$ and $\M_{d-d_j}(g_j\,\z)$
in the standard monomial basis $(\x^\alpha)$, $\alpha\in\N^n$, which is the worst choice from a numerical point
of view. Indeed, it is well known that the Hankel-type moment and localizing matrices matrix become rapidly
ill-conditioned as their size increases. A much better choice would the basis of Hermite polynomials that are
orthogonal with respect to the Gaussian measure on $\R^n$ (and similarly for the exponential measure). However this would require a non trivial implementation work beyond the scope of the present paper. Nonetheless
the 2D-examples provided below already give some indications on the potential of the method since
in many cases (but not all) good approximations $\underline{\omega}_d\leq\mu(\om)\leq\overline{\omega}_d$ are obtained with relatively small $d$ (say $d=8,9$ or $10$).

\subsection{Numerical experiments for the Gaussian measure}

We have considered  the Gaussian measures 
$d\mu=\exp(\Vert\x\Vert^2/\sigma^2)$ for the three values $\sigma^2=0.5,0.8,1$.
In a first set of experiments we have tested the methodology on some examples of compact sets $\om\subset\R^2$
where we could compute a very good approximate value of  $\mu(\om)$ by other methods. 
For each example we have compared the hierarchy of semidefinite programs (\ref{scheme-1-primal}) with
the hierarchy (\ref{scheme-3-primal}). Then we have tested the two hierarchies on
some simple example of non-compact sets $\om\subset\R^2$ for which the value
$\mu(\om)$ can be obtained exactly.

\subsection*{On compact sets $\om\subset\R^2$}
\begin{ex}
\label{ex1}
{\rm Consider non-centered Euclidean balls $\om:=\{\x: \Vert \x-\u\Vert^2\leq 1\}$ with
\[\u= (0,0),\quad (0.1,0.1),\quad (0.5,0.5),\quad (0.1,0.5),\quad (1,0),\]
that is,
\[\om=\{\,\x\::\quad g(\x)\,\leq\,1\,\},\quad\mbox{with}\quad \x\mapsto g(\x)\,=\,(x_1-u_1)^2+(x_2-u_2)^2.\]
We first show how the formulation (\ref{scheme-3-primal}) with $f=1$ 
drastically improves convergence when comparing with the initial formulation 
(\ref{scheme-1-primal}) with $f=1$ and $f=1-g$ (which vanishes on $\partial\om$)
for the example where $\sigma=0.5$ and $\u=(0.5,0.5)$.
Indeed in Figure \ref{figure-1} one may see that 
the convergence is very slow for the hierarchy (\ref{scheme-1-primal}) with $f=1$ (left) whereas it is very fast
for the hierarchy (\ref{scheme-3-primal}) (right). In Figure \ref{figure-2} are plotted the
values $\overline{\omega}_d$ and the values
$u_0^d$ obtained in (\ref{scheme-1-primal}) with $f=1-g$.
One can see that the convergence $u^d_0\to\mu(\om)$
of the hierarchy (\ref{scheme-1-primal}) with $f=1-g$ is not monotone and not as good as (\ref{scheme-3-primal}).
(The yellow horizontal line is the value of $\mu(\om)$, exact up to 6 digits.)
Iteration $1$ was run with $d=3$ (i.e. with moments up to order $6$) and $d+1,\ldots,9$ (i.e., with moments up of order $20$).
With $d=7$ (i.e. with moments up to order $14$) the optimal value $s_7=0.573324$ 
obtained in (\ref{scheme-3-primal}) is already very close to the value $0.573132$ (exact up to $6$ digits).

\begin{figure}[ht]
\begin{center}
\includegraphics[width=0.45\textwidth]{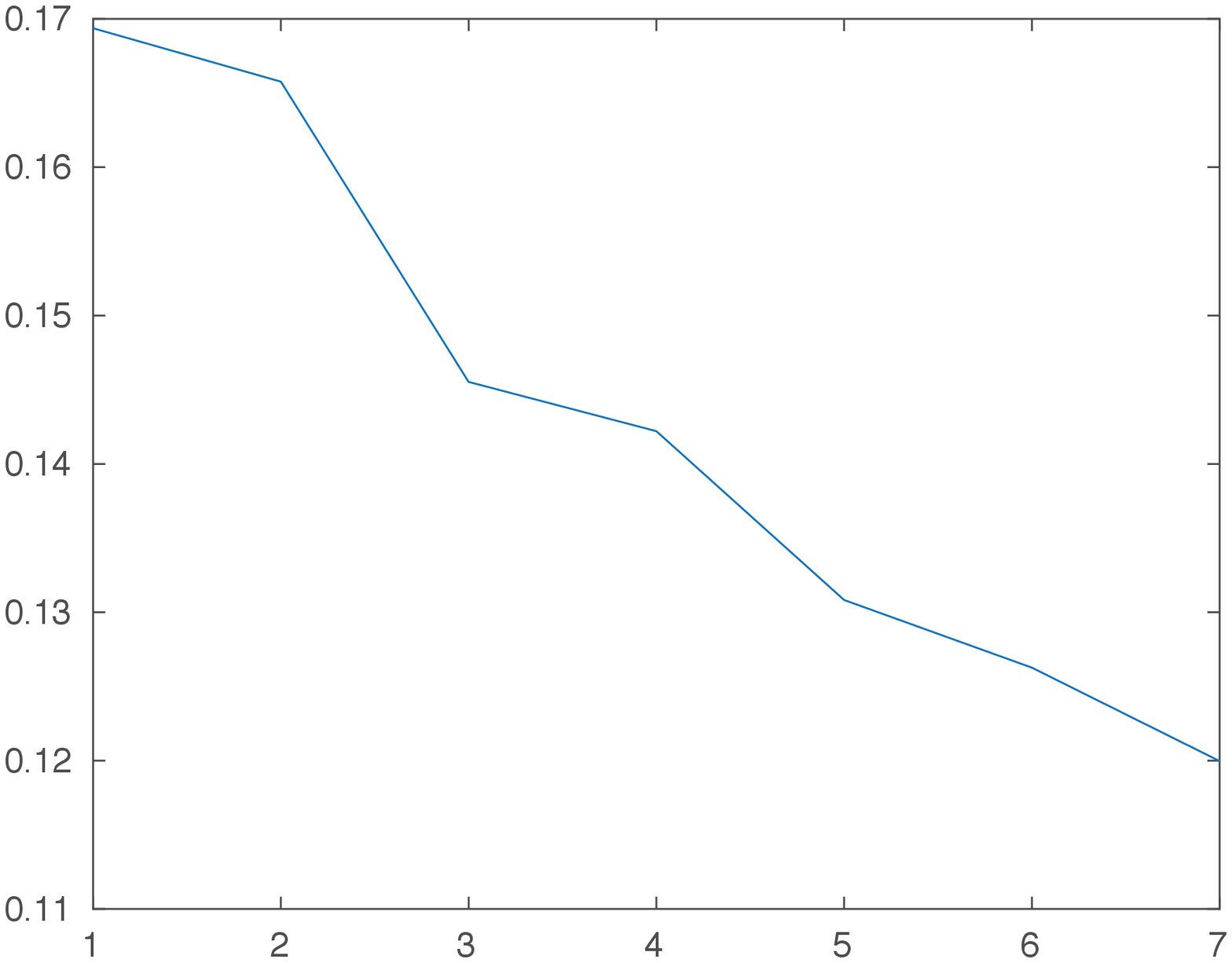}
\includegraphics[width=0.45\textwidth]{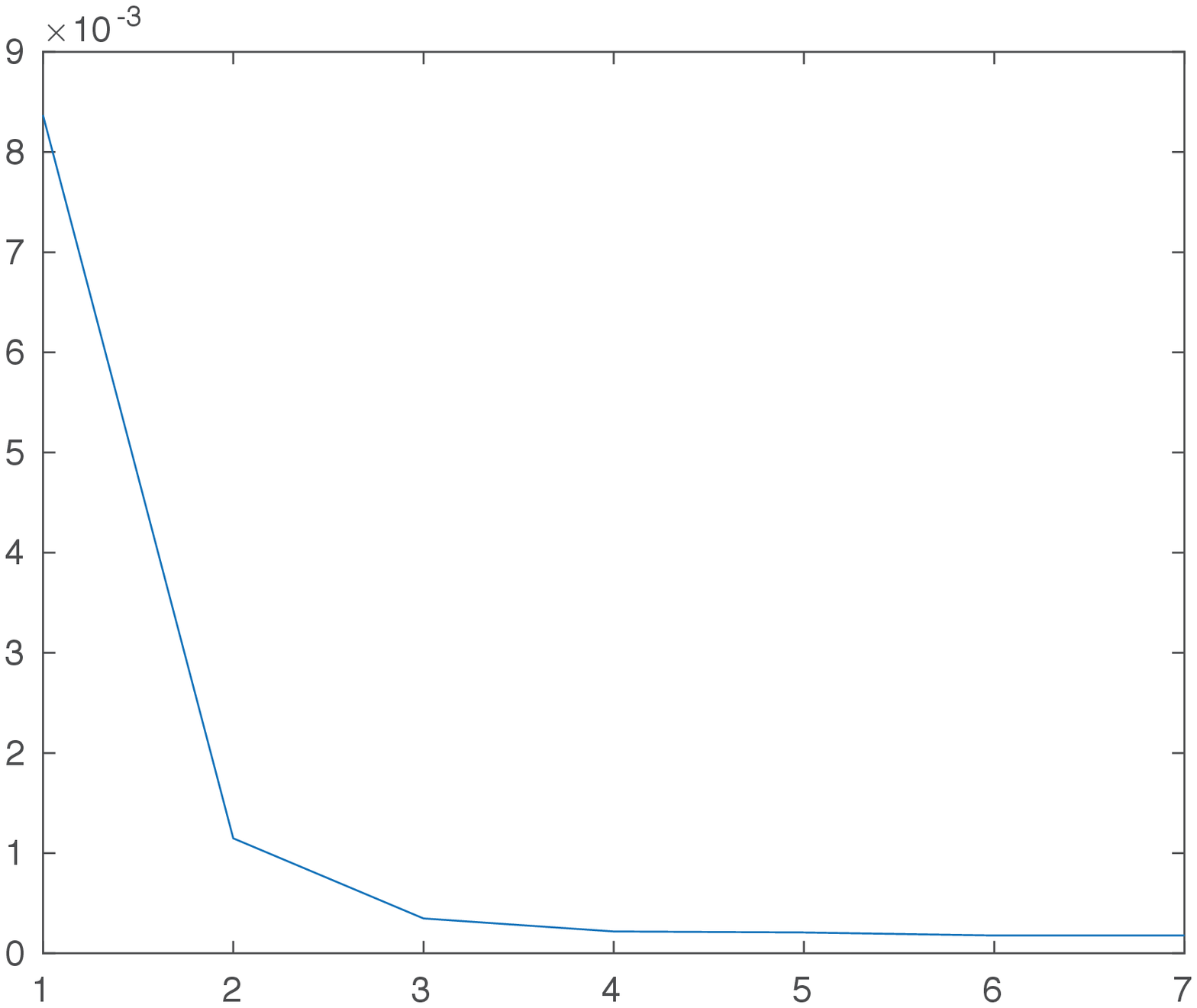}
\caption{Example \ref{ex1}: $\sigma=0.5$, $\u=(0.5,0.5)$; 
Comparing $\overline{o}_d-\mu(\om)$ with $f=1$ (left) and 
$\overline{\omega}_d-\mu(\om)$  (right). \label{figure-1}}
\end{center}
\end{figure}

\begin{figure}[ht]
\begin{center}
\includegraphics[width=0.6\textwidth]{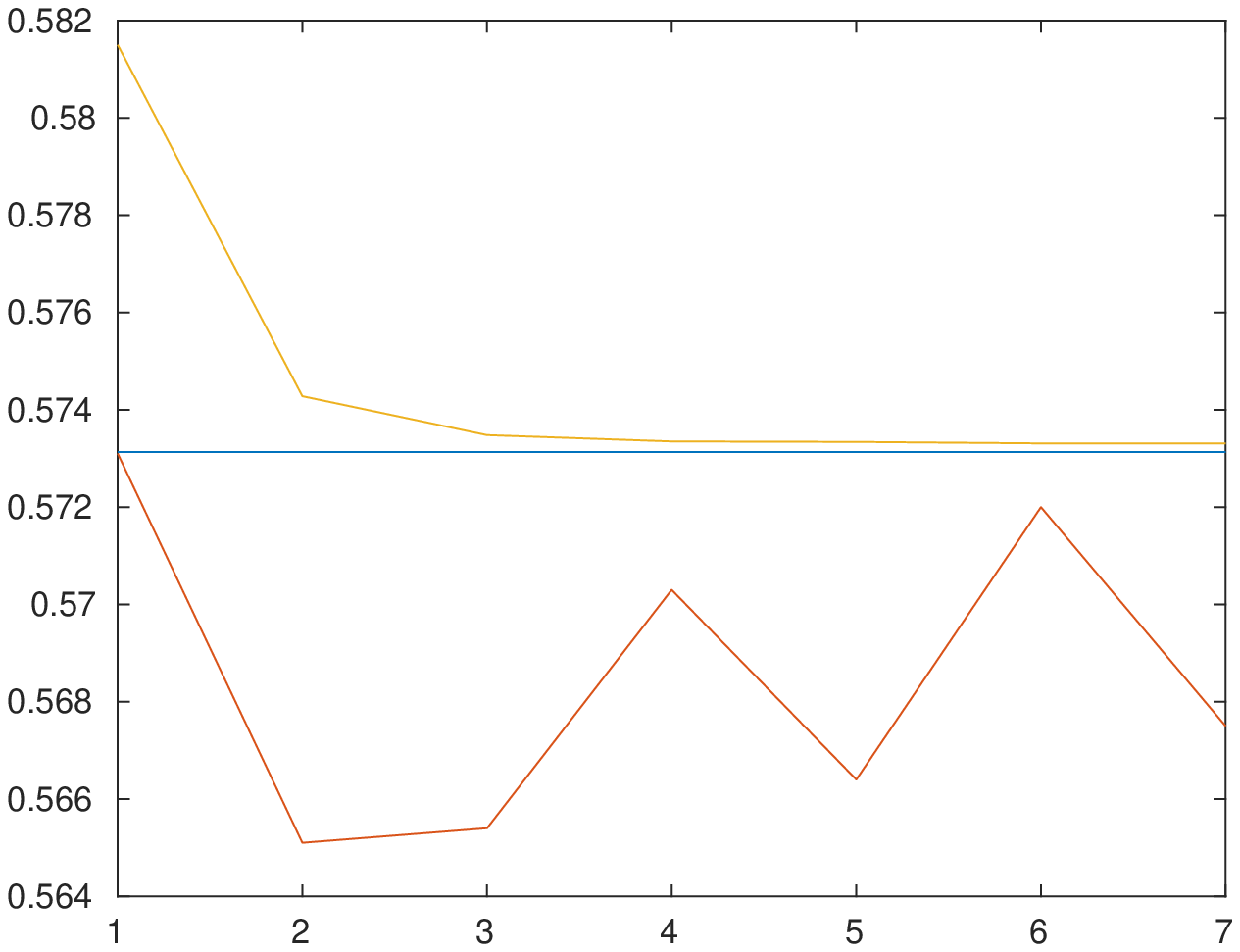}
\caption{Example \ref{ex1}: $\sigma=0.5$, $\u=(0.5,0.5)$; Comparing (\ref{scheme-3-primal}) and (\ref{scheme-1-primal}) with
$f=1-g$ \label{figure-2}; the horizontal line is the exact value.}
\end{center}
\end{figure}
\begin{figure}[ht]
\begin{center}
{\includegraphics[width=0.45\textwidth]{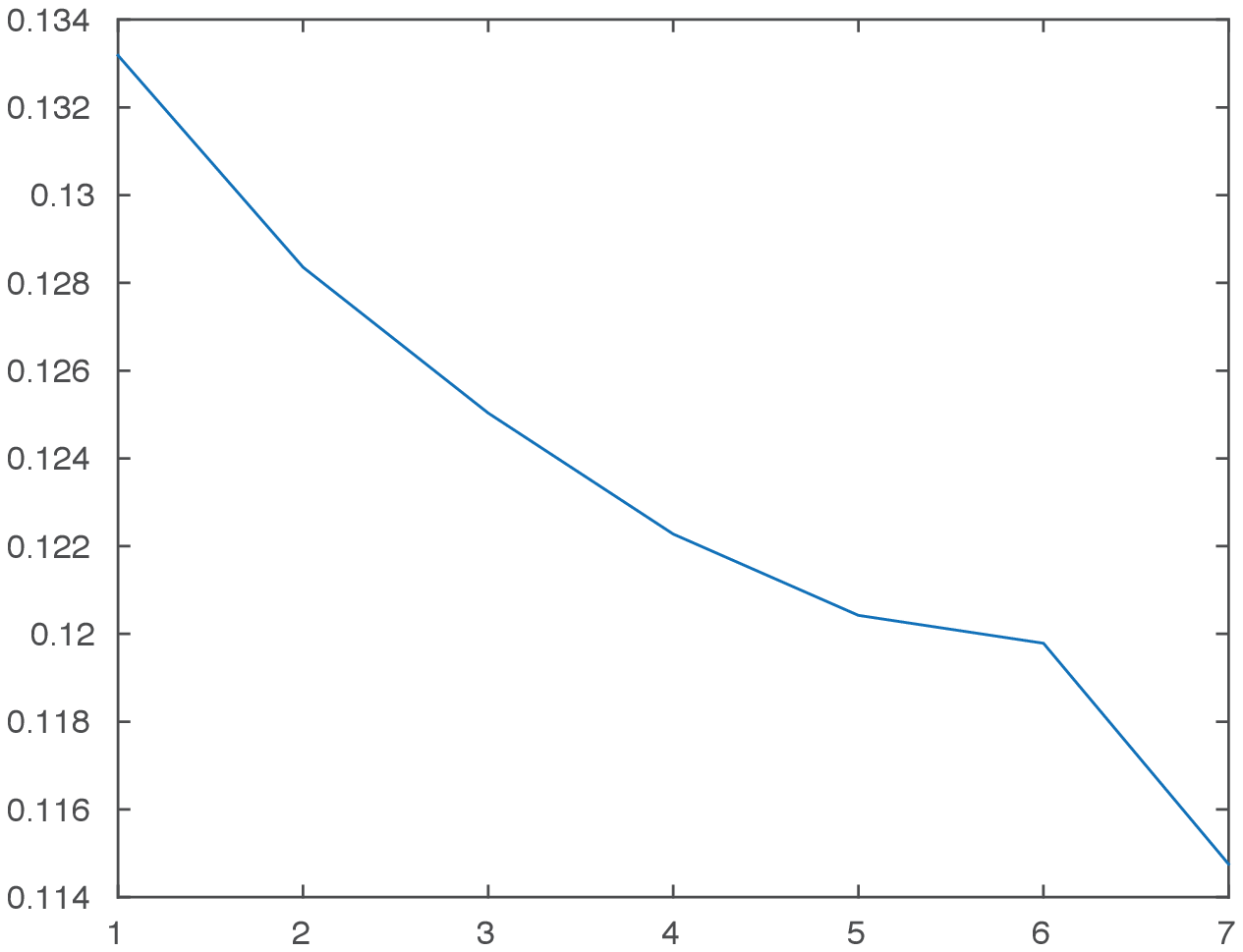}
\includegraphics[width=0.45\textwidth]{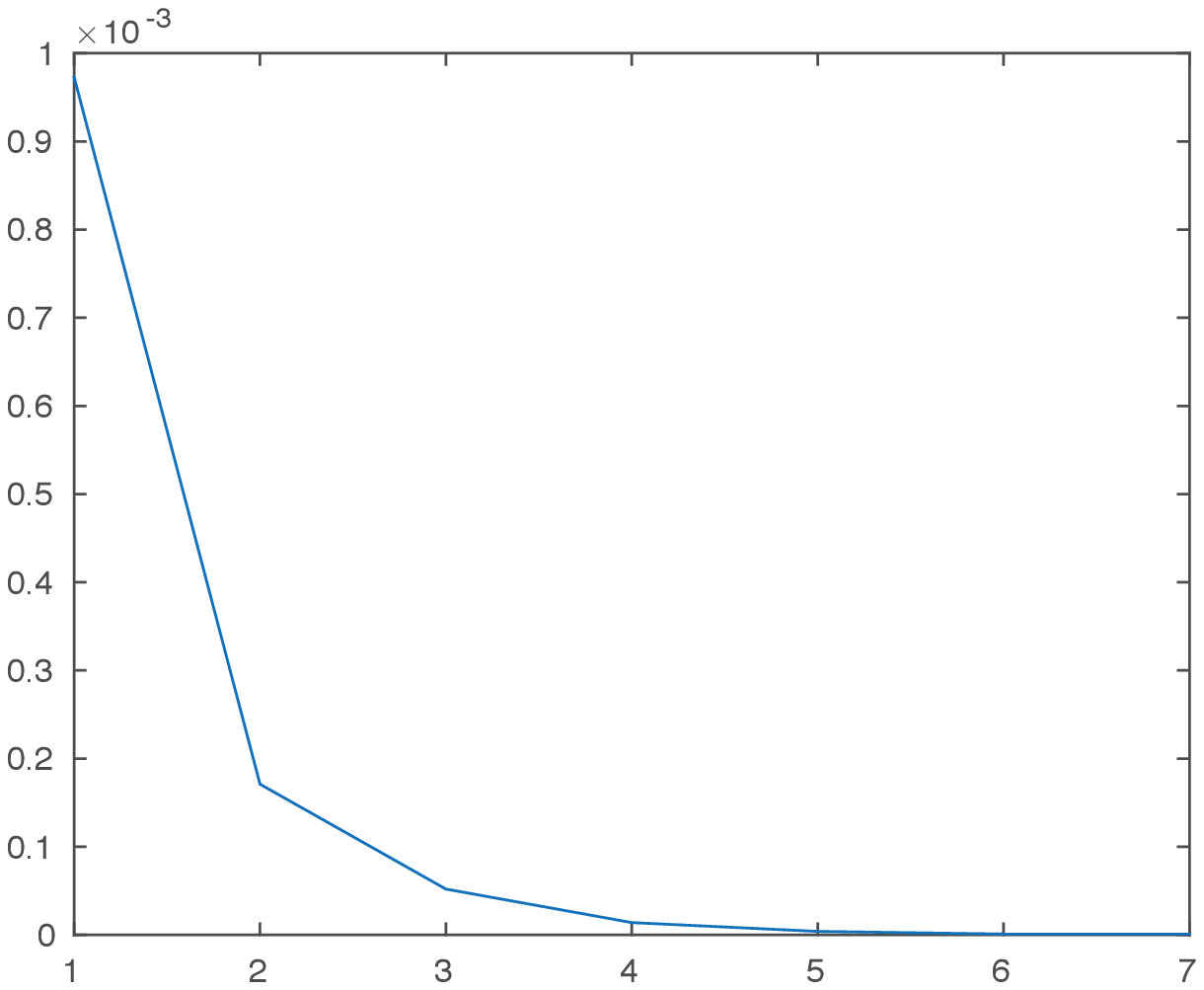}}
\caption{Example \ref{ex1}: $\overline{o}_d-\mu(\om)$ (scheme (\ref{scheme-1-primal}) with $f=1$, left) and  
$\overline{\omega}_d-\mu(\om)$ (scheme (\ref{scheme-3-primal}) right) \label{figure-3}}
\end{center}
\end{figure}
\begin{figure}[ht]
\begin{center}
{\includegraphics[width=0.45\textwidth]{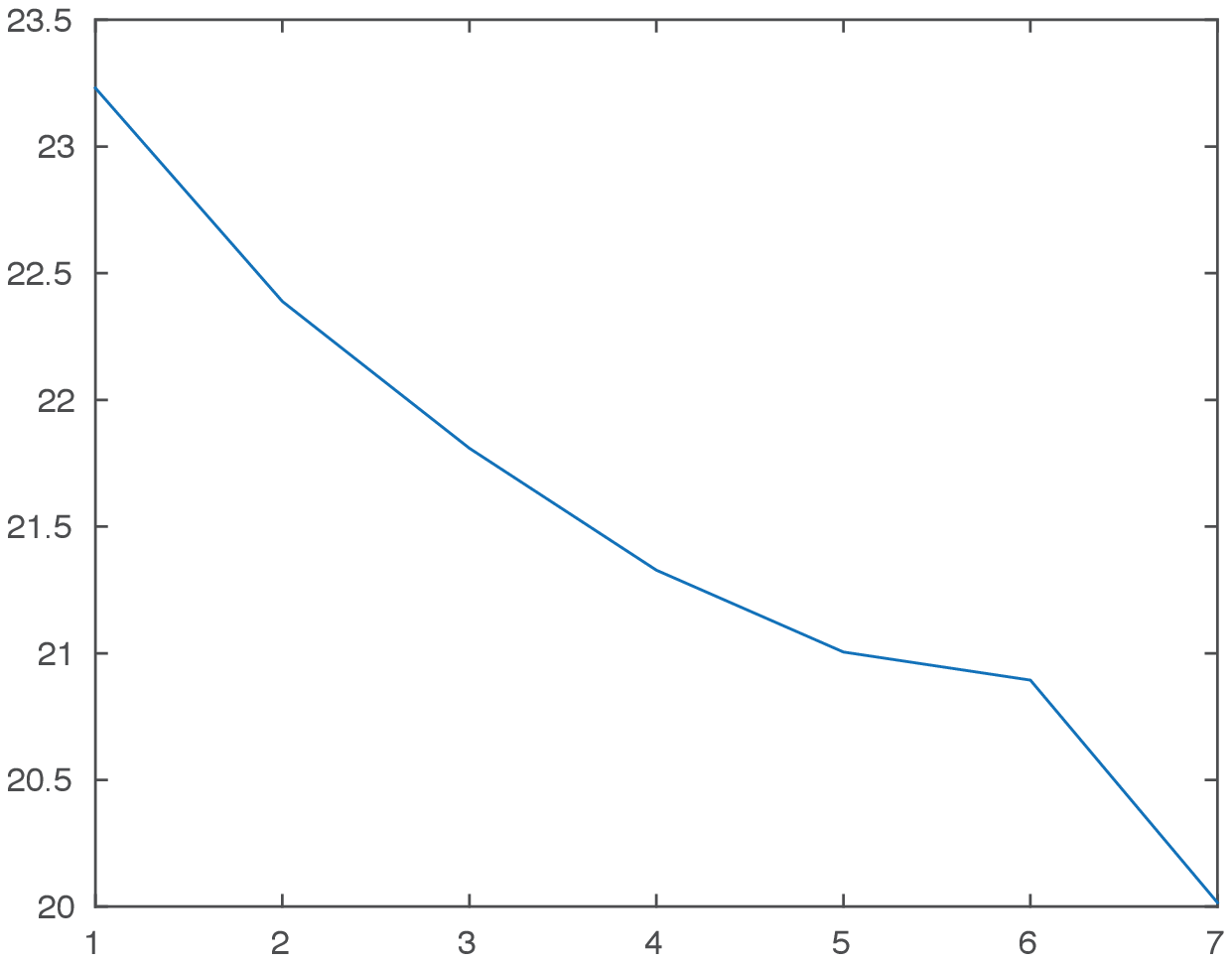}
\includegraphics[width=0.45\textwidth]{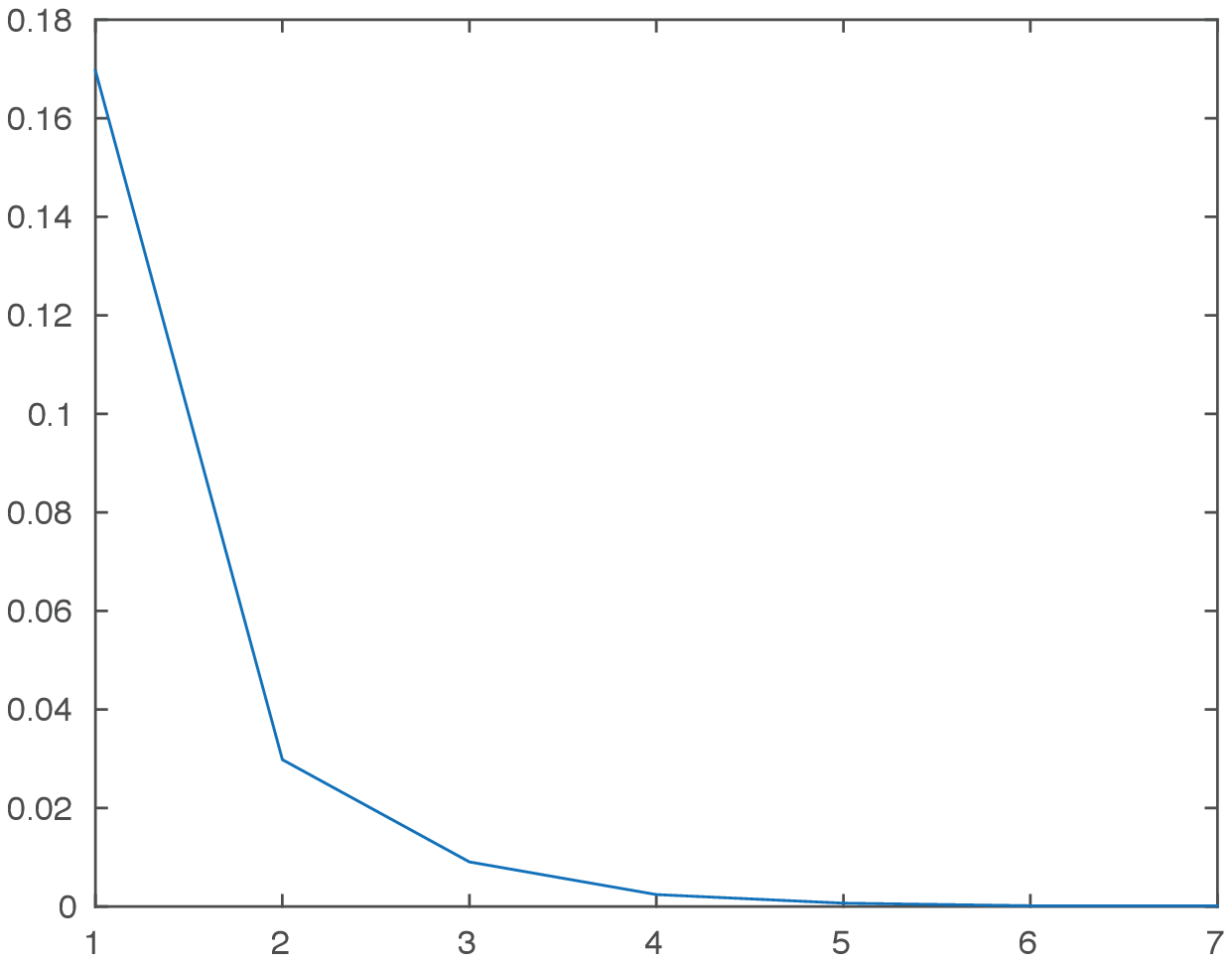}}
\caption{Comparing the relative errors $100(\ov_d-\mu(\om))/\mu(\om)$ (scheme (\ref{scheme-1-primal}) with $f=1$, left) with 
$100(\overline{\omega}_d-\mu(\om))/\mu(\om)$ (scheme (\ref{scheme-3-primal}), right) \label{figure-4}}
\end{center}
\end{figure}
For some other value of the parameters $\sigma$ and $\u$,
Figure \ref{figure-3} displays the difference $\ov_d-\mu(\om)$ for the hierarchy
(\ref{scheme-1-primal}) with $f=1$ and $\overline{\omega}_d-\mu(\om)$ for the hierarchy (\ref{scheme-3-primal}),
whereas Figure \ref{figure-4} displays the respective relative errors.

Results obtained for several combinations of the parameters  $\sigma$ and $\u$
are displayed in Tables \ref{table11-sigma=0.5},
\ref{table11-sigma=0.8} and 
\ref{table11-sigma=0.999}.
\begin{table}[!h]
\begin{center}
\begin{tabular}{||l|r|r|r|r|r||}
\hline
	 & $\u=(0,0)$	&$\u=(0.1,0.1)$&$\u=(0.1,0.5)$& $\u=(0.5,0.5)$ &$\u=(1,0)$\\
\hline
$\overline{\omega}_7$    &0.782888&0.781978 &0.783345	&  0.753521    	&  0.441636	   \\
$\underline{\omega}_7$   & 0.782842	& 0.781932 &0.783238	&   0.753492  	& 0.441631	   \\
$\epsilon_7$    & $0.005\%$	& $0.006\%$ &$0.01\%$	&   $0.003\%$   	& $0.001\%$\\
\hline
\end{tabular}
\end{center}
\caption{Values of $\overline{\omega}_7$, $\underline{\omega}_7$
and $\epsilon_7:=100(\overline{\omega}_7-\underline{\omega}_7)/\underline{\omega}_7$
with $\sigma=.5$ \label{table11-sigma=0.5}}
\end{table}
\begin{table}[!h]
\begin{center}
\begin{tabular}{||l|r|r|r|r|r||}
\hline
	 & $\u=(0,0)$	&$\u=(0.1,0.1)$&$\u=(0.1,0.5)$& $\u=(0.5,0.5)$ &$\u=(1,0)$\\
\hline
$\overline{\omega}_7$    &1.906281&1.900152 &1.920172	&  1.770550  	&  1.183305	   \\
$\underline{\omega}_7$   & 1.905856	& 1.899708 &1.919838	&   1.770276   	& 1.182967	   \\
$\epsilon_7$    & $0.02\%$	& $0.02\%$ &$0.017\%$	&   $0.015\%$   	& $0.028\%$\\
\hline
\end{tabular}
\end{center}
\caption{Values of $\overline{\omega}_7$, $\underline{\omega}_7$
and $\epsilon_7:=100(\overline{\omega}_7-\underline{\omega}_7)/\underline{\omega}_7$
with $\sigma=.8$ \label{table11-sigma=0.8}}
\end{table}
\begin{table}[!h]
\begin{center}
\begin{tabular}{||l|r|r|r|r|r||}
\hline
	 & $\u=(0,0)$	&$\u=(0.1,0.1)$&$\u=(0.1,0.5)$& $\u=(0.5,0.5)$ &$\u=(1,0)$\\
\hline
$\overline{\omega}_7$    &2.811312&2.802918 &2.843411	&  2.624901   	&  2.947715	   \\
$\underline{\omega}_7$   &2.806004 	& 2.797471 &2.839425	& 2.621340    	& 2.946099	   \\
$\epsilon_7$    & $0.19\%$	& $0.19\%$ &$0.14\%$	&   $0.13\%$   	& $0.05\%$\\
\hline
\end{tabular}
\end{center}
\caption{Values of $\overline{\omega}_7$, $\underline{\omega}_7$
and $\epsilon_7:=100(\overline{\omega}_7-\underline{\omega}_7)/\underline{\omega}_7$
with $\sigma=.999$ \label{table11-sigma=0.999}}
\end{table}
}
\end{ex}
\begin{ex}
\label{ex2}
{\rm Consider a set $\om:=\{\x\in\R^2: g(\x)\,\leq\,1\}$ with
$\x\mapsto g(\x):=(\x-\u)^T\A\,(\x-\u)$ with:
\[\A\,:=\,
\left[\begin{array}{cc}0.4& 0.1\\ 0.1 &-0.4\end{array}\right]\,
\left[\begin{array}{cc}4& 0\\ 0 &8\end{array}\right]\,
\left[\begin{array}{cc}0.4& 0.1\\ 0.1 &-0.4\end{array}\right]\]
and $\u=(0.1,0.5)$, $\u=(0.5,0.1)$. The results displayed in Table \ref{table-xax1}
show that good approximations can be obtained with relatively small $d=9$ when $\sigma\leq 1$.
\begin{table}[!h]
\begin{center}
\begin{tabular}{||l|c|c|r||}
\hline
&\multicolumn{3}{c||}{$\u=(0.1,0.5)$}\\
\hline
$\sigma$	 & $\overline{\omega}_{9}$	&$\underline{\omega}_{9}$&$(\overline{\omega}_{9}-\underline{\omega}_{9})/\underline{\omega}_{9})$\\
\hline
1   & 1.728640	& 1.657139& 4.3\%  \\
0.8   & 1.305168	& 1.298370& 0.5\%  \\
0.5 &  0.648429 & 0.648415 & 0.002\%\\
\hline
&\multicolumn{3}{c||}{$\u=(0.5,0.1)$}\\
\hline
$\sigma$	 & $\overline{\omega}_{9}$	&$\underline{\omega}_{9}$&$(\overline{\omega}_{9}-\underline{\omega}_{9})/\underline{\omega}_{9})$\\
\hline
1   &  1.798363	& 1.730477& 3.9\%  \\
0.8   & 1.401565& 1.394053 & 0.53\%  \\
0.5 &  0.724022 & 0.724005 & 0.002\%\\
\hline
\end{tabular}
\end{center}
\caption{Example \ref{ex2}: $\om=\{\x:(\x-\u)^T\A(\x-\u)\leq 1\}$; compact case.\label{table-xax1}}
\end{table}

}
\end{ex}
\subsection*{Non-compact sets $\om\subset\R^2$}

We next provide some examples of non-compact sets 
$\om\subset\R^2$ and show that  the hierarchy (\ref{scheme-3-primal}) can still provide good results.

\begin{ex}
\label{ex3}
{\rm {\bf Complement of Euclidean balls:} We have first considered evaluating $\mu(\om^c)$ for the complement $\om^c:=\{\x: \Vert\x-\u\Vert^2\geq 1\}$ 
of the sets $\om$ considered in Example \ref{ex1}. 
As the lower bound $\underline{\omega}_d$ on $\mu(\om)$ in Example \ref{ex1} was computed 
via an upper bound on $\mu(\om^c)$, the results in Table \ref{table11-sigma=0.5},
Table \ref{table11-sigma=0.8} and Table \ref{table11-sigma=0.999} 
clearly indicate that
good upper and lower bounds are also obtained for $\om^c$.

}\end{ex}
\begin{ex}
\label{ex4}
{\rm {\bf Non-convex quadratics:} 
Let $\u=(0.1,0.5)$, $\u=(0.5,1)$, 
\[\A\,:=\,
\left[\begin{array}{cc}0.4& 0.1\\ 0.1 &-0.4\end{array}\right]\,
\left[\begin{array}{cc}4& 0\\ 0 &-8\end{array}\right]\,
\left[\begin{array}{cc}0.4& 0.1\\ 0.1 &-0.4\end{array}\right],\]
and $\x\mapsto g(\x)=(\x-\u)^T\A(\x-\u)$ so that the set $\om:=\{\x: g(\x)\leq 1\}$ is non-compact. The results displayed
in Table \ref{table-xax2} show that good approximations can be obtained with relatively small $d=9$ when $\sigma\leq 1$.
\begin{table}[!h]
\begin{center}
\begin{tabular}{||l|c|c|r||}
\hline
&\multicolumn{3}{c||}{$\u=(0.5,0.1)$}\\
\hline
$\sigma$	 & $\overline{\omega}_{9}$	&$\underline{\omega}_{9}$&$(\overline{\omega}_{9}-\underline{\omega}_{9})/\underline{\omega}_{9})$\\
\hline
1   &  2.829605	& 2.824718& 0.17\%  \\
0.8   & 1.876731& 1.876609 & 0.006\%  \\
\hline
&\multicolumn{3}{c||}{$\u=(0.1,0.5)$}\\
\hline
$\sigma$	 & $\overline{\omega}_{9}$	&$\underline{\omega}_{9}$&$(\overline{\omega}_{9}-\underline{\omega}_{9})/\underline{\omega}_{9})$\\
\hline
1   &  2.989832	& 2.986599& 0.10\%  \\
0.8   & 1.969188& 1.969103 & 0.004\%  \\
\hline
\end{tabular}
\end{center}
\caption{$\om=\{\x:(\x-\u)^T\A(\x-\u)\leq 1\}$; non-compact case.  \label{table-xax2}}
\end{table}
}\end{ex}
\begin{ex}
\label{ex-5}
{\rm {\bf Half-spaces}: In this example we consider the half-space $\om:=\{\x\in\R^2:x_1+2x_2\geq 1\}$,
and compute the upper and lower bounds $\overline{\omega}_d$ and $\underline{\omega}_d$ for $d=8$.
the results displayed in Table \ref{table-halfspace} show that 
good approximations can be obtained with relatively small $d=8$ when $\sigma\leq 1$.
\begin{table}[!h]
\begin{center}
\begin{tabular}{||l|c|c|c||}
\hline
$\sigma$	 & $\overline{\omega}_8$	&$\underline{\omega}_8$&$(\overline{\omega}_8-\underline{\omega}_8)/\underline{\omega}_8)$\\
\hline
1.0   & 0.828105	& 0.827800& 0.03\%  \\
\hline
0.8   & 0.4314786	& 0.431473& 0.001\%  \\
\hline
0.5 & 0.080858 & 0.0808578 & 0.0003\%\\
\hline
\end{tabular}
\end{center}
\caption{$\om:=\{\x\in\R^2:x_1+2x_2\geq 1\}$ \label{table-halfspace}}
\end{table}
}\end{ex}
\begin{ex}
\label{ex-3}
{\rm In this example we consider the cone $\om:=\{\x\in\R^2:x_1+2x_2\leq -0.5;\:x_1\geq-0.8\}$,
with apex $\x=(-0.8,0.15)$,
and compute the upper and lower bounds $\overline{\omega}_d$ and $\underline{\omega}_d$ for $d=10$.

\begin{table}[!h]
\begin{center}
\begin{tabular}{||l|c|c|c||}
\hline
$\sigma$	 & $\overline{\omega}_{10}$	&$\underline{\omega}_{10}$&$(\overline{\omega}_{10}-\underline{\omega}_{10})/\underline{\omega}_{10})$\\
\hline
0.5   & 0.208606	& 0.19410& 7\%  \\
\hline
0.4   & 0.108321	& 0.10768& 0.6\%  \\
\hline
0.3 & 0.041300 & 0.0411906 & 0.26\%\\
\hline
\end{tabular}
\end{center}
\caption{$\om:=\{\x\in\R^2:x_1+2x_2\leq -0.5;\:x_1\geq-0.8\}$ \label{tablecone}}
\end{table}
Table \ref{tablecone} shows that when $\sigma$ is relatively small then a good approximation
can be obtained by using no more than $2d=20$ moments. 
On the other hand for the cone $\om:=\{\x:\x\geq0\}$, even with $\sigma=0.3$ the approximation with 
$d=10$ is rather rough as we have $\overline{\omega}_{10}=0.11172\geq \mu(\om)\approx0.070685$ and lower bound being negative is not informative. It seems that this is because the origin (the apex of the cone $\om$) is in a region where the density 
$\exp(-h)$ is maximum. In this case higher order moments are needed for a better approximation. This in turn requires
some care in solving the associated semidefinite programs (\ref{scheme-3-primal}). Indeed,
when expressed in the standard monomial basis $(\x^\alpha)$, $\alpha\in\N$,
the moment and localizing matrices in (\ref{scheme-3-primal}) become
ill-conditioned. An interesting issue of further investigation and beyond the scope of this paper is to
consider alternative bases, e.g. the basis of Hermite polynomials which are orthogonal with respect to the Gaussian measure.
}\end{ex}

\subsection{Numerical experiments for the exponential measure}

In this second set of experiments we have chosen the exponential measure with density
$\x\mapsto \exp(-\lambda\sum_i \,x_i)$ on the positive orthant $\R^n_+=\{\x: \x\geq0\}$, for some real scalar $\lambda>0$.

\subsection*{Compact sets}
\begin{ex} {\bf Ellipsoids:}
\label{ex21}
{\rm As in Example \ref{ex2}, consider the set $\om:=\{\x\in\R^2: g(\x)\,\leq\,1\}$ with
$\x\mapsto g(\x):=(\x-\u)^T\A\,(\x-\u)$ with:
\[\A\,:=\,
\left[\begin{array}{cc}0.4& 0.1\\ 0.1 &-0.4\end{array}\right]\,
\left[\begin{array}{cc}4& 0\\ 0 &8\end{array}\right]\,
\left[\begin{array}{cc}0.4& 0.1\\ 0.1 &-0.4\end{array}\right]\]
and $\u=(0.1,0.5)$, $\u=(0.5,0.1)$.
\begin{table}[!h]
\begin{center}
\begin{tabular}{||l|c|c|c||}
\hline
&\multicolumn{3}{c||}{$\u=(0.1,0.5)$}\\
\hline
$\lambda$	 & $\overline{\omega}_{9}$	&$\underline{\omega}_{9}$&$(\overline{\omega}_{9}-\underline{\omega}_{9})/\underline{\omega}_{9})$\\
\hline
4   & 0.061906	&0.060311 & 2.64\%  \\
4.5   & 0.049080	&0.048498 & 1.2\%  \\
5   & 0.039844	& 0.039619& 0.56\%  \\
\hline
&\multicolumn{3}{c||}{$\u=(0.5,0.1)$}\\
\hline
$\sigma$	 & $\overline{\omega}_{9}$	&$\underline{\omega}_{9}$&$(\overline{\omega}_{9}-\underline{\omega}_{9})/\underline{\omega}_{9})$\\
\hline
4&  0.061394 & 0.059065& 3.94\%\\
4.5   &  0.048757	& 0.047922 & 1.74\%  \\
5   & 0.039622&0.039283  & 0.86\%  \\

\hline
\end{tabular}
\end{center}
\caption{$\om=\{\x:(\x-\u)^T\A(\x-\u)\leq 1\}$; compact case.\label{table-xax-exp}}
\end{table}
}
\end{ex}
\begin{ex} {\bf Simplex:}
\label{ex22}
{\rm Let  $\om:=\{\x\in\R^2_+: 3x_1+x_2\leq1\}$.
In this case it is easy to compute the exact value $\mu(\om)$ by Laplace transform techniques, which gives
\begin{equation}
\label{exact-exp}
\mu(\om)=\frac{1}{\lambda^2}\,\left(1+\frac{\exp(-p)}{2}-\frac{3\exp(-p/3)}{2}\right).\end{equation}
Table \ref{table-simplex-exp} displays results obtained with $d=8$.
\begin{table}[!h]
\begin{center}
\begin{tabular}{||l|c|c|c|c|c||}
\hline
$\lambda$	 & $\mu(\om)$ & $\overline{\omega}_{8}$	&$\underline{\omega}_{8}$&
$(\overline{\omega}_{8}-\underline{\omega}_{8})/\underline{\omega}_{8})$&
$(\overline{\omega}_{8}-\mu(\om))/\mu(\om)$\\
\hline
5   & 0.028802& 0.029771	& 0.028086& 6\%  &3.36\%\\
6   &0.022173 &0.022605	&0.021790 & 3.74\%  &1.95\%\\
\hline

\end{tabular}
\end{center}
\caption{Ex. \ref{ex22}: $\om=\{\x\geq0:3x_1+x_2\leq 1\}$; compact case.\label{table-simplex-exp}}
\end{table}
}
\end{ex}

\subsection*{Non-compact sets}
\begin{ex} {\bf Half-space:}
\label{ex23}
{\rm Let $\om$ be the non-compact set $\{\x\in\R^2_+: 3x_1+x_2\geq1\}$.
From (\ref{exact-exp}) we have
\[\mu(\om)=\mu(\R^2)-\mu(\om^c)
\,=\,\frac{1}{\lambda^2}\,\left(\frac{3\exp(-p/3)}{2}-\frac{\exp(-p)}{2}\right).\]
Results for $d=8$ are displayed in Table \ref{table-halfspace-exp}.
For $d=9$ some numerical instability occured.

\begin{table}[!h]
\begin{center}
\begin{tabular}{||l|c|c|c|c|c||}
\hline
$\lambda$	 & $\mu(\om)$ & $\overline{\omega}_{8}$	&$\underline{\omega}_{8}$&
$(\overline{\omega}_{8}-\underline{\omega}_{8})/\underline{\omega}_{8})$&
$(\overline{\omega}_{8}-\mu(\om))/\mu(\om)$\\
\hline
5   & 0.011197& 0.011913&0.010228& 16.4\%  &6.4\%\\
6   &0.005604 &0.005986	& 0.005171& 15.7\%  &6.8\%\\
\hline
\end{tabular}
\end{center}
\caption{Ex \ref{ex23}: $\om=\{\x\geq0;\:3x_1+x_2\geq 1\}$\label{table-halfspace-exp}}
\end{table}
}
\end{ex}
\begin{ex} {\bf Hyperbola:}
\label{ex24}
{\rm Let $\om:=\{\x\in\R^2_+: x_1x_2\leq0.1\}$.
Results for $d=8$ are displayed in Table \ref{table-halfspace-exp}.
As in Example \ref{ex23}, some numerical instability occured for $d=9$.
\begin{table}[!h]
\begin{center}
\begin{tabular}{||l|c|c|c||}
\hline
\multicolumn{4}{||c||}{$\om=\{\x\geq0;\,x_1x_2\leq 0.1\}$}\\
\hline
\hline
$\lambda$	 & $\overline{\omega}_{8}$	&$\underline{\omega}_{8}$&
$(\overline{\omega}_{8}-\underline{\omega}_{8})/\underline{\omega}_{8})$\\
\hline
5   & 0.004328& 0.003970& 9\% \\
6   &0.001701&0.001616& 5.2\\
\hline
\hline
\multicolumn{4}{||c||}{$\om=\{\x\geq0;\,x_1x_2\geq 0.1\}$}\\
\hline
\hline
$\lambda$	 & $\overline{\omega}_{8}$	&$\underline{\omega}_{8}$&
$(\overline{\omega}_{8}-\underline{\omega}_{8})/\underline{\omega}_{8})$\\
\hline
5   & 0.036015&0.035635&1\%\\
6   & 0.026166&0.026068&0.37\%\\

\hline
\end{tabular}
\end{center}
\caption{$\om=\{\x\geq0:x_1x_2\leq 0.1\}$ and $\om^c$\label{table-hyperbole}}
\end{table}
}
\end{ex}
\begin{ex} {\bf Non-convex quadratics:}
\label{ex221}
{\rm Let $\om:=\{\x\geq0: g(\x)\,\leq\,0.05\}$ with
$\x\mapsto g(\x):=(\x-\u)^T\A\,(\x-\u)$ with:
\[\A\,:=\,
\left[\begin{array}{cc}0.4& 0.1\\ 0.1 &-0.4\end{array}\right]\,
\left[\begin{array}{cc}-1& 0\\ 0 &10\end{array}\right]\,
\left[\begin{array}{cc}0.4& 0.1\\ 0.1 &-0.4\end{array}\right]\]
and $\u=(0,0)$, $\u=-0.1,0.1)$ and $\u=(0.1,-0.1)$. The results displayed in Table
\ref{table-xax3-exp} show that for the same value of $d=9$ 
the quality of the approximation can be sensitive to data. For instance
it is much worse with $\u=(-0.1,0.1)$ than with $\u=(0,0)$ or $\u=(-0.1,0.1)$.

\begin{table}[!h]
\begin{center}
\begin{tabular}{||l|c|c|c||}
\hline
&\multicolumn{3}{c||}{$\u=(0,0)$}\\
\hline
$\lambda$	 & $\overline{\omega}_{9}$	&$\underline{\omega}_{9}$&$(\overline{\omega}_{9}-\underline{\omega}_{9})/\underline{\omega}_{9})$\\
\hline
8   & 0.013444	&0.012065 & 11\%  \\
9   & 0.010961	&0.009890 & 10\%  \\
\hline
&\multicolumn{3}{c||}{$\u=(-0.1,0.1)$}\\
\hline
$\lambda$	 & $\overline{\omega}_{9}$	&$\underline{\omega}_{9}$&$(\overline{\omega}_{9}-\underline{\omega}_{9})/\underline{\omega}_{9})$\\
\hline
8&  0.014877 & 0.014288& 4.1\%\\
9   &  0.011925& 0.011512 & 3.6\%  \\
\hline
\hline
&\multicolumn{3}{c||}{$\u=(0.1,-0.1)$}\\
\hline
$\lambda$	 & $\overline{\omega}_{9}$	&$\underline{\omega}_{9}$&$(\overline{\omega}_{9}-\underline{\omega}_{9})/\underline{\omega}_{9})$\\
\hline
8&  0.009464 &0.006009& 57\%\\
\hline
8&  0.007933 &0.004929& 60\%\\
\hline
\end{tabular}
\end{center}
\caption{$\om=\{\x\geq0:(\x-\u)^T\A(\x-\u)\leq 0.05\}$;\label{table-xax3-exp}}
\end{table}
}
\end{ex}

\section{Concluding remarks}

We have provided a systematic numerical scheme to approximate 
the Gaussian or exponential measure $\mu(\om)$ of a class of semi-algebraic sets $\om\subset\R^2$.
In principle $\mu(\om)$ can be approximated as closely as desired by solving a hierarchy of semidefinite programs of increasing size $d$. Of course the size of the resulting semidefinite programs increases with $d$. Namely
the semidefinite program (\ref{scheme-3-primal}) has $2{n+2d\choose n}$ variables and both (real symmetric) moment matrices $\M_d(\u)$ and $\M_d(\v)$  are of size ${n+d\choose n}\times {n+d\choose n}$.

So as long as one is interested in $2D$ or $3D$ problems
the size is not the most serious problem. Indeed when $\mu(\om)$ is not very small compared to the mass of $\mu$
(and $\sigma<1$ for the gaussian measure
and $\lambda >5$ for the exponential measure), good results can be expected fort reasonable values of $d$ (as shown
in some illustrative 2D-examples). On the other hand if one needs high values of $d$ (e.g. when $\sigma >>1$ or $\lambda>1$) then
the precision of the SDP solvers can become a serious issue as the semidefinite programs (\ref{scheme-3-primal}) become ill-conditioned. 

In particular in such conditions the choice 
of the monomial basis $(\x^\alpha)$, $\alpha\in\N^n$, in which to express the moment and localizing matrices $\M_d(\u)$, $\M_d(g_j\,\u)$ is not appropriate. At this preliminary validation stage of the methodology it was done for simplicity and easyness of implementation 
(in order to use the software package GloptiPoly \cite{gloptipoly}).
To avoid ill-conditionning a more appropriate strategy is to use the basis of {\it Hermite polynomials}, orthogonal with respect to the Gaussian measure $\mu$. With such a choice one may expect to be able to 
solve (\ref{scheme-3-primal}) for higher values of $d$ and so obtain better upper and lower bounds.

In any case it is also worth mentioning that even if one is forced to stop the hierarchy (\ref{scheme-3-primal}) at a relatively small value
of $d$, one has still obtained a non trivial finite sequence of upper and lower bounds on $\mu(\om)$ for non-trivial (and possibly non-compact) sets $\om$.

\subsection*{Acknowledgements}
Research  funded by by the European Research Council
(ERC) under the European Union's Horizon 2020 research and innovation program
(grant agreement ERC-ADG 666981 TAMING)"

\end{document}